\documentclass[12pt,a4paper]{amsart}
\usepackage[top=1.15in, bottom=1.15in, left=1.17in, right=1.17in]{geometry}
\usepackage{amssymb}
\usepackage{pdflscape}
\usepackage{amscd}
\usepackage{fancyhdr}
\usepackage{tikz,ltablex,booktabs,tikz-cd}
\usepackage{graphicx}
\usepackage{color}
\usepackage[all]{xy}    
\usepackage{comment}

\newcommand{\qt}{\widetilde{Q}_0}

\newcommand{\act}{\textbf{a}}
\newcommand{\A}{\mathcal{A}}

\newcommand{\bC}{\mathbb{C}}

\newcommand{\tQ}{\widetilde{Q}}

\providecommand{\keywords}[1]
{
  \small	
  \textbf{\textit{Keywords---}} #1
}

\newcommand{\pre}{\operatorname{\textbf{pre-}}\nolimits}

\def\Hom{\mbox{Hom}}
\def\Ker{\mbox{Ker}}
\def\Im{\mbox{Im}}
\def\rep{\mbox{rep\,}}

\newtheorem{theorem}{Theorem}[section]
\newtheorem{corollary}[theorem]{Corollary}
\newtheorem{lemma}[theorem]{Lemma}
\newtheorem{proposition}[theorem]{Proposition}

\newtheorem{definition}[theorem]{Definition}
\newtheorem{remark}[theorem]{Remark}
\newtheorem{openquestion}[theorem]{Open Question}

\title{Double framed moduli spaces of quiver representations}
\author{Marco Armenta, Thomas Br\"{u}stle, Souheila Hassoun, Markus Reineke}
\begin{document}

\parindent0pt

\begin{abstract}
    Motivated by problems in the neural networks setting, we study moduli spaces of double framed quiver representations and give both a linear algebra description and a representation theoretic description of these moduli spaces. We define a network category whose isomorphism classes of objects correspond to the orbits of quiver representations, in which neural networks map input data. We then prove that the output of a neural network depends only on the corresponding point in the moduli space. Finally, we present a different perspective on mapping neural networks with a specific activation function, called ReLU, to a moduli space using the symplectic reduction approach to quiver moduli.
\end{abstract}

\keywords{Quiver representations, Moduli spaces, Neural networks}

\subjclass[2020]{16G20, 14D22, 53D30, 68T01, 68T07}

\maketitle


\section{Introduction}

Moduli spaces of framed quiver representations have been considered since \cite{King94, Rei} and the dual theory (coframed quiver representations) has been also considered independently, see for example \cite{Rei}. Then, double framing (framing and co-framing at the same time) was found to describe neural networks and the computations they perform on data, see \cite{AJ20}. Motivated by these applications we perform an analysis of the underlying geometric and representation theoretic perspectives of such moduli spaces of double-framed quiver representations.

Let $Q$ be a finite acyclic quiver. Let $s_1,\ldots,s_p$ be the sources of $Q$, and let $t_1,\ldots,t_q$ be the sinks of $Q$. Let $\tilde{Q}$ be the full subquiver of $Q$ on the vertices $\qt:=Q_0\setminus\{s_1,\ldots,s_p,t_1,\ldots,t_q\}$. Let ${\bf d}$ be a dimension vector for $Q$, and fix complex vector spaces $V_i$ of dimension $d_i$ for all $i\in Q_0$. Let $$R_{\bf d}(Q)=\bigoplus_{\alpha:i\rightarrow j}{\rm Hom}(V_i,V_j)$$ be the variety of complex representations of $Q$ of dimension vector ${\bf d}$, and let $$G_{\bf d}(Q)=\prod_{i\in Q_0}{\rm GL}(V_i)$$ be the base change group, acting on $R_{\bf d}(Q)$ via
$$(g_i)_i\cdot(V_\alpha)_\alpha=(g_jV_\alpha g_i^{-1})_{\alpha:i\rightarrow j}.$$ We consider the subgroup $$G_{\bf d}(\tilde{Q})=\{(g_i)_i\in G_{\bf d}(Q)\, |\, g_i=1\mbox{ if }i\not\in \qt \}.$$
We will define and study moduli spaces parametrizing $G_{\bf d}(\tilde{Q})$-orbits in $R_{\bf d}(Q)$ and then relate this to the double framing setting via deframing. This allows us to study different stability conditions for such representations and link the different moduli spaces that arise, particularly in the thin case which is important for neural networks. We then give a representation theoretic description of these moduli spaces which we then translate to a purely linear algebra description. We use the representation theoretic approach to these moduli spaces to study the problem from a categorical point of view. Later on, we will present applications to the theory of neural networks as stated in \cite{AJ20}.

\section{Deframing}\label{section:deframing}

To interpret this problem as the problem of finding double framed versions of moduli spaces, we first make the following definition:

\begin{definition} For every vertex $i\in \qt$, define
$$U_i=\bigoplus_{\alpha:s_k\rightarrow i}V_{s_k},\, W_i=\bigoplus_{\alpha:i\rightarrow t_l}V_{t_l}.$$
\end{definition}

In other words, $U_i$ contains one copy of $V_s$ for every arrow from a source $s$ to $i$, and similarly $W_i$ contains one copy of $V_t$ for every arrow from $i$ to a sink $t$.  We denote $u_i=\dim U_i$ and $w_i=\dim W_i$. We already note at this point that $u_i\not=0$ for every source $i$ of $\tilde{Q}$ (otherwise there would be no arrow to $i$ in $Q$, that is, $i$ would have been a source in $Q$), and similarly $w_i\not=0$ for every sink of $i$.

The above definition allows us, for every $i\in \qt$, to collect all maps $V_\alpha:V_s\rightarrow V_i$ into a single map
$$f_i=(V_\alpha)_{\alpha:s_k\rightarrow i}:U_i\rightarrow V_i,$$
and similarly
$$h_i=(V_\alpha)_{\alpha:i\rightarrow t_l}:V_i\rightarrow W_i.$$
Denoting by ${\bf\tilde d}$ the restriction of ${\bf d}$ to $\tilde{Q}$, we thus find:

\begin{lemma}\label{l22} The above construction induces an isomorphism of affine spaces
$$R_{\bf d}(Q)\simeq R_{\bf\tilde{d}}(\tilde{Q})\times\bigoplus_{i\in \qt}{\rm Hom}(U_i,V_i)\times\bigoplus_{i\in \qt}{\rm Hom}(V_i,W_i),$$
and the action of $G_{\bf d}(\tilde{Q})$ translates to
$$(g_i)_{i\in \qt}\cdot((V_\alpha)_\alpha,(f_i)_i,(h_i)_i)=((g_jV_\alpha g_i^{-1})_{\alpha:i\rightarrow j},(g_if_i)_i,(h_ig_i^{-1})_i).$$
\end{lemma}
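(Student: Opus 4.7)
The plan is to exhibit the isomorphism by partitioning the arrow set $Q_1$ according to where its endpoints sit. Since $Q$ is acyclic, the sources $s_1,\ldots,s_p$ have no incoming arrows and the sinks $t_1,\ldots,t_q$ have no outgoing arrows, so every arrow $\alpha:i\to j$ falls into one of the disjoint classes:
\begin{itemize}
\item $i,j\in \qt$ (interior arrows),
\item $i=s_k$ is a source and $j\in \qt$,
\item $i\in \qt$ and $j=t_l$ is a sink.
\end{itemize}
(For simplicity we assume, as is implicit in the statement, that there are no arrows from a source directly to a sink; such arrows would give rigid factors on which $G_{\bf d}(\qt)$ acts trivially and can be handled separately.) The direct sum defining $R_{\bf d}(Q)$ splits accordingly into three summands, and the first is by definition $R_{\bf\tilde d}(\qt)$.

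Next I would regroup the remaining two summands vertex-by-vertex. Fix $i\in \qt$. The summand $\bigoplus_{\alpha:s_k\to i}\Hom(V_{s_k},V_i)$ is canonically identified with $\Hom\bigl(\bigoplus_{\alpha:s_k\to i}V_{s_k},\,V_i\bigr)=\Hom(U_i,V_i)$ via the universal property of the direct sum, and this identification is precisely $(V_\alpha)_{\alpha:s_k\to i}\mapsto f_i$. The dual identification $\bigoplus_{\alpha:i\to t_l}\Hom(V_i,V_{t_l})\cong\Hom(V_i,W_i)$ sends $(V_\alpha)_{\alpha:i\to t_l}\mapsto h_i$. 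Taking the direct sum over $i\in\qt$ yields the claimed isomorphism of affine spaces.

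Finally I would check equivariance by tracking a single arrow $\alpha:i\to j$ through the isomorphism. For $i,j\in\qt$ the action formula is unchanged. For $\alpha:s_k\to i$ with $i\in\qt$, the constraint $g_{s_k}=1$ reduces the original formula $g_jV_\alpha g_i^{-1}$ to $g_iV_\alpha$, and the universal property of $\bigoplus V_{s_k}$ translates the collection of these relations into $f_i\mapsto g_if_i$; the sink case is dual, giving $h_i\mapsto h_ig_i^{-1}$. The only real work is bookkeeping — making sure the arrow partition is exhaustive and that the universal-property identification commutes with the $G_{\bf d}(\qt)$-action — which is the main (and essentially only) obstacle.
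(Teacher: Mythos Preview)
Your proposal is correct and is essentially a detailed elaboration of the paper's approach: the paper presents the lemma as an immediate consequence of the preceding construction (collecting the source-to-$i$ maps into $f_i$ and the $i$-to-sink maps into $h_i$) and gives no further proof. Your arrow-partition argument, the universal-property identification, and the equivariance check simply make explicit what the paper treats as self-evident; your remark about excluding source-to-sink arrows is a fair reading of the implicit hypothesis.
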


\begin{definition} We introduce the deframed quiver $Q'$: its set of vertices is $$Q'_0=\qt \cup\{\infty\},$$ and its set of arrows is 
$$Q'_1=\tilde{Q}_1\cup\{\beta_{i,k}:\infty\rightarrow i\, |\, i\in \qt,\, k=1,\ldots,u_i\}\cup$$
$$\cup\{\gamma_{i,l}:i\rightarrow \infty\,|\, i\in \qt,\, l=1,\ldots,w_i\}.$$
We define a dimension vector ${\bf d'}$ for $Q'$ by
$$d'_i=d_i\mbox{ for }i\in \qt,\; d'_\infty=1.$$
\end{definition}

A representation of $Q'$ of dimension vector ${\bf d'}$ is thus given by a representation $V$ of $\tilde{Q}$ of dimension vector ${\bf\tilde d}$, together with vectors $v_{i,k}\in V_i$ representing the arrows $\beta_{i,k}$, and covectors $\varphi_{i,l}\in V_i^*$ representing the arrows $\gamma_{i,l}$. The base change group $$G_{\bf d'}(Q')\simeq\mathbb{C}^*\times G_{\bf d}(\tilde{Q})$$
acts on such a representation by
$$(\lambda,(g_i)_i)\cdot((V_\alpha)_\alpha,(v_{i,k})_{i,k}),(\varphi_{i,l})_{i,l})=((g_jV_\alpha g_i^{-1})_{\alpha:i\rightarrow j},(\lambda^{-1}\cdot g_iv_{i,k})_{i,k},(\lambda\cdot\varphi_{i,l}g_i^{-1})_{i,l})).$$
Choose bases for all spaces $U_i$ and $W_i$ defined above. Then, for every $i\in \qt$, we can canonically collect the vectors $v_{i,k}$ into a map $f_i:U_i\rightarrow V_i$, and we can canonically collect the covectors $\varphi_{i,l}$ into a map $h_i:V_i\rightarrow W_i$. This shows (using the above identification):

\begin{lemma} We have
$$R_{\bf d'}(Q')\simeq R_{\bf d}(Q),$$
and this isomorphism is equivariant with respect to the action of $G_{\bf d}(\tilde{Q})$, considered as a subgroup of $G_{\bf d'}(Q')$.
\end{lemma}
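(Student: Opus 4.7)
The plan is to unpack the definition of $R_{\bf d'}(Q')$ arrow by arrow and match it against the decomposition provided by Lemma~\ref{l22}. First I would split $Q'_1=\tilde{Q}_1\sqcup\{\beta_{i,k}\}\sqcup\{\gamma_{i,l}\}$ and decompose $R_{\bf d'}(Q')$ accordingly. Since $d'_\infty=1$, each arrow $\beta_{i,k}:\infty\rightarrow i$ contributes a factor ${\rm Hom}(\mathbb{C},V_i)\simeq V_i$ (yielding the vector $v_{i,k}$), and each $\gamma_{i,l}:i\rightarrow \infty$ contributes ${\rm Hom}(V_i,\mathbb{C})\simeq V_i^*$ (yielding the covector $\varphi_{i,l}$), while the $\tilde{Q}_1$ factor is precisely $R_{\bf\tilde{d}}(\tilde{Q})$.

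Next, using the chosen bases of $U_i$ and $W_i$, I would bundle the $u_i$ vectors $v_{i,k}\in V_i$ into a single map $f_i:U_i\rightarrow V_i$, and the $w_i$ covectors $\varphi_{i,l}\in V_i^*$ into a map $h_i:V_i\rightarrow W_i$. This produces an isomorphism of affine spaces
$$R_{\bf d'}(Q')\simeq R_{\bf\tilde{d}}(\tilde{Q})\times\bigoplus_{i\in \qt}{\rm Hom}(U_i,V_i)\times\bigoplus_{i\in \qt}{\rm Hom}(V_i,W_i),$$
which, composed with the isomorphism of Lemma~\ref{l22}, gives the desired identification $R_{\bf d'}(Q')\simeq R_{\bf d}(Q)$.

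For equivariance I would use that the subgroup $G_{\bf d}(\tilde{Q})\subset G_{\bf d'}(Q')\simeq \mathbb{C}^*\times G_{\bf d}(\tilde{Q})$ corresponds to $\{1\}\times G_{\bf d}(\tilde{Q})$. Specialising the displayed action of $G_{\bf d'}(Q')$ on $R_{\bf d'}(Q')$ to $\lambda=1$ transforms the $\beta$-components as $v_{i,k}\mapsto g_iv_{i,k}$ and the $\gamma$-components as $\varphi_{i,l}\mapsto \varphi_{i,l}g_i^{-1}$. Under the basis identifications these read $f_i\mapsto g_if_i$ and $h_i\mapsto h_ig_i^{-1}$, which is exactly the $G_{\bf d}(\tilde{Q})$-action on $R_{\bf d}(Q)$ described in Lemma~\ref{l22}.

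The verification is routine linear algebra, so there is no substantive obstacle; the only subtlety worth flagging is that the isomorphism depends on the auxiliary choice of bases for the $U_i$ and $W_i$. Since $G_{\bf d}(\tilde{Q})$ acts trivially on $V_{s_k}$ and $V_{t_l}$, however, a different choice of bases would only alter the identification by an automorphism commuting with the $G_{\bf d}(\tilde{Q})$-action, so equivariance is unaffected.
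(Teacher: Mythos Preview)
Your proposal is correct and follows exactly the same approach as the paper: the paper's argument (given in the paragraph immediately preceding the lemma) is precisely to decompose $R_{\bf d'}(Q')$ arrow by arrow, collect the vectors $v_{i,k}$ and covectors $\varphi_{i,l}$ into maps $f_i$ and $h_i$ via the chosen bases of $U_i$ and $W_i$, and then invoke Lemma~\ref{l22}. Your equivariance check and the remark on basis-dependence are more explicit than what the paper writes, but there is no difference in substance.
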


We now claim:

\begin{lemma} The groups $G_{\bf d}(\tilde{Q})$ and $G_{\bf d'}(Q')=\mathbb{C}^*\times G_{\bf d}(\tilde{Q})$ have the same orbits in $R_{\bf d'}(Q')$. \end{lemma}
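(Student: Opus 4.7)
The plan is to show the two orbit sets coincide by absorbing the extra $\mathbb{C}^*$-factor into the $G_{\bf d}(\tilde{Q})$-action via scalar matrices. Since $G_{\bf d}(\tilde{Q})$ is embedded in $G_{\bf d'}(Q')$ as $\{1\}\times G_{\bf d}(\tilde{Q})$, every $G_{\bf d}(\tilde{Q})$-orbit is automatically contained in a $G_{\bf d'}(Q')$-orbit. What needs proof is the reverse: that the $\mathbb{C}^*$-factor does not enlarge the orbits.

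The key observation is that for any $\lambda\in\mathbb{C}^*$, the tuple of scalar matrices $(\lambda\cdot\mathrm{id}_{V_i})_{i\in\qt}\in G_{\bf d}(\tilde{Q})$ acts trivially on the $\tilde{Q}$-representation part (since $(\lambda\cdot\mathrm{id})V_\alpha(\lambda\cdot\mathrm{id})^{-1}=V_\alpha$), while it scales the framing vectors $v_{i,k}$ by $\lambda$ and the co-framing covectors $\varphi_{i,l}$ by $\lambda^{-1}$. Comparing this with the formula given for the $G_{\bf d'}(Q')$-action, the element $(\lambda,1)\in\mathbb{C}^*\times G_{\bf d}(\tilde{Q})$ acts by scaling the $v_{i,k}$ by $\lambda^{-1}$ and the $\varphi_{i,l}$ by $\lambda$. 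Hence the action of $(\lambda^{-1}\cdot\mathrm{id}_{V_i})_{i}\in G_{\bf d}(\tilde{Q})$ coincides exactly with the action of $(\lambda,1)\in G_{\bf d'}(Q')$.

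From this, given any element $(\lambda,(g_i)_i)\in G_{\bf d'}(Q')=\mathbb{C}^*\times G_{\bf d}(\tilde{Q})$, I would set $g'_i:=\lambda^{-1}g_i$ for each $i\in\qt$ and verify by a one-line direct computation that $(g'_i)_i\in G_{\bf d}(\tilde{Q})$ reproduces the action of $(\lambda,(g_i)_i)$ on each of the three pieces $(V_\alpha)_\alpha$, $(v_{i,k})$, $(\varphi_{i,l})$, using that scalars cancel in the conjugation $g'_jV_\alpha(g'_i)^{-1}=g_jV_\alpha g_i^{-1}$ and combine correctly on the framings.

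There is no real obstacle here; the only mild point to be careful about is writing the scalar substitution in the right direction (one needs $\lambda^{-1}$ rather than $\lambda$) so that the signs in the framing action cancel as required. The argument is essentially the standard deframing trick: the scalar torus embedded diagonally in $G_{\bf d}(\tilde{Q})$ covers the deframing torus $\mathbb{C}^*$ acting on the new vertex $\infty$.
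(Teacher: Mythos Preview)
Your proposal is correct and follows essentially the same argument as the paper: both compensate the extra $\mathbb{C}^*$-factor by the diagonal scalar element $(\lambda^{-1}\mathrm{id}_{V_i})_i\in G_{\bf d}(\tilde{Q})$, verifying directly that it reproduces the action of $(\lambda,\mathrm{id})$. Your write-up is in fact slightly more thorough, spelling out the trivial containment direction and the extension to a general element $(\lambda,(g_i)_i)$, whereas the paper only checks the case $(\lambda,\mathrm{id})$ and leaves the factorization $(\lambda,(g_i)_i)=(\lambda,\mathrm{id})\cdot(1,(g_i)_i)$ implicit.
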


\begin{proof}
It suffices to show that the additional action of $\mathbb{C}^*$ can be compensated by the action of $G_{\bf d}(\tilde{Q})$. Indeed, given $\lambda\in \mathbb{C}^*$, we define $g\in G_{\bf d}(\tilde{Q})$ by $g_i=\lambda^{-1}{\rm id}_{V_i}$ for all $i\in \qt$. Then
$$(\lambda,{\rm id})\cdot((V_\alpha)_\alpha,(v_{i,k})_{i,k}),(\varphi_{i,l})_{i,l})=((V_\alpha)_{\alpha},(\lambda^{-1}\cdot v_{i,k})_{i,k},(\lambda\cdot\varphi_{i,l})_{i,l}))$$
and also $$(1,(g_i)_i)\cdot((V_\alpha)_\alpha,(v_{i,k})_{i,k}),(\varphi_{i,l})_{i,l})=((V_\alpha)_{\alpha},(\lambda^{-1}\cdot v_{i,k})_{i,k},(\lambda\cdot\varphi_{i,l})_{i,l})).$$
\end{proof}

We have thus proved the equivalence of the action of $G_{\bf d}(\tilde{Q})$ on $R_{\bf d}(Q)$ and of the action of $G_{\bf d'}(Q')$ on $R_{\bf d'}(Q')$. In particular, we have reinterpreted our problem of classifying representations of $Q$ for the restricted action to the problem of classifying representations of $Q'$ up to isomorphism. For the latter, we can now apply all known results on moduli spaces of quiver representations and translate them back to $Q$. 

\section{Moduli spaces for the trivial stability} \label{sec:trivialstability}

The proof of the following result is straightforward.
\begin{lemma} The quiver $Q'$ always has oriented cycles.
\end{lemma}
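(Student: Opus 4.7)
The plan is to exhibit an explicit oriented cycle passing through the auxiliary vertex $\infty$. Assuming $\tilde{Q}_0 \neq \emptyset$ (the degenerate case $\tilde{Q}_0 = \emptyset$ would reduce $Q'$ to a single vertex with no arrows, and is presumably excluded in the interesting setting of the paper), pick any $i \in \tilde{Q}_0$. Since $\tilde{Q}$ is a full subquiver of the finite acyclic quiver $Q$, it is itself finite and acyclic, so following outgoing arrows of $\tilde{Q}$ starting at $i$ eventually reaches a sink $j$ of $\tilde{Q}$, and following incoming arrows of $\tilde{Q}$ ending at $i$ eventually reaches a source $i'$ of $\tilde{Q}$. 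In particular, there is an oriented path $i' \to \cdots \to i \to \cdots \to j$ inside $\tilde{Q}$ (which becomes a path in $Q'$ via the inclusion $\tilde{Q}_1 \subseteq Q'_1$), possibly of length zero if $i$ is isolated in $\tilde{Q}$.

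The second ingredient is the observation already recorded in the excerpt: $u_{i'} \neq 0$ and $w_j \neq 0$. For completeness, $i'$ is a source of $\tilde{Q}$ but not of $Q$ (since $i' \in \tilde{Q}_0$), so $i'$ has some incoming arrow in $Q$; this arrow cannot come from another vertex of $\tilde{Q}_0$ (else $i'$ would not be a source of $\tilde{Q}$) nor from a sink of $Q$ (which has no outgoing arrows), so it must come from a source of $Q$, giving $u_{i'} \geq 1$ and hence an arrow $\beta_{i',1} : \infty \to i'$ in $Q'$. Dually, $w_j \geq 1$ produces an arrow $\gamma_{j,1} : j \to \infty$ in $Q'$.

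Concatenating $\beta_{i',1}$, the path from $i'$ to $j$ in $\tilde{Q}$, and $\gamma_{j,1}$ yields an oriented cycle $\infty \to i' \to \cdots \to j \to \infty$ in $Q'$, which proves the lemma. There is no real obstacle: the entire argument is an unpacking of the definition of $Q'$ combined with the source/sink observation already made just after the definition of $U_i$ and $W_i$. The only mildly delicate point is to choose $i'$ and $j$ inside $\tilde{Q}$ (rather than working with the arbitrary starting vertex $i$), since a generic $i \in \tilde{Q}_0$ need not itself satisfy $u_i \neq 0$ or $w_i \neq 0$.
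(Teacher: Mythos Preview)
Your argument is correct and is exactly the straightforward verification the paper alludes to without spelling out: pick a source $i'$ and a sink $j$ of $\tilde{Q}$ connected by a path, use the observation (already recorded right after the definition of $U_i$ and $W_i$) that $u_{i'}\neq 0$ and $w_j\neq 0$, and close up the cycle through $\infty$ via a $\beta$-arrow and a $\gamma$-arrow. Your handling of the degenerate case $\tilde{Q}_0=\emptyset$ is also appropriate.
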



Given this property, we don't have to introduce stabilty at this point, and we can look at the moduli space $M_{\bf d'}^{\rm ssimp}(Q')$ of semisimple representations of $Q'$. We apply the general theory by Le Bruyn-Procesi \cite{LeBruynProcesi90} for these spaces:

\begin{theorem} In the above situation, we have the following:
\begin{enumerate}
\item The moduli space $M_{\bf d'}^{\rm ssimp}(Q')$ is an affine irreducible variety.
\item It parametrizes the closed orbits of $G_{\bf d'}(Q')$ in $R_{\bf d'}(Q')$, thus the closed orbits of $G_{\bf d}(\tilde{Q})$ in $R_{\bf d}(Q)$.
\item It parametrizes the isomorphism classes of semisimple representations of $Q'$ of dimension vector ${\bf d'}$.
\item It has dimension $$\dim R_{\bf d}(Q)-\dim G_{\bf d}(\tilde{Q})$$
if there exists a simple representation of $Q'$ of dimension vector ${\bf d'}$.
\item It is a rational variety.
\end{enumerate}
\end{theorem}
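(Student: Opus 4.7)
The plan is to recognize the theorem as the complete list of structural properties produced by the Le Bruyn--Procesi construction \cite{LeBruynProcesi90} applied to the quiver $Q'$, together with the dimension formula from Schur's lemma and a separate rationality argument. I would verify the five parts in the order (1)--(3), then (4), then (5), since the first three are a single formal consequence of the GIT framework and (4) feeds naturally into (5).

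Step 1 (parts (1)--(3)). I would take $M_{\bf d'}^{\rm ssimp}(Q')$ to be the affine GIT quotient
$$M_{\bf d'}^{\rm ssimp}(Q') = R_{\bf d'}(Q')/\!/G_{\bf d'}(Q') = \mathrm{Spec}\bigl(\bC[R_{\bf d'}(Q')]^{G_{\bf d'}(Q')}\bigr).$$
Since $G_{\bf d'}(Q')$ is reductive and $R_{\bf d'}(Q')$ is an affine space, the quotient is affine, and it is irreducible because the categorical quotient map is dominant with irreducible source. The Le Bruyn--Procesi theorem identifies the closed points of this quotient with the closed $G_{\bf d'}(Q')$-orbits, which are precisely the isomorphism classes of semisimple representations of dimension vector ${\bf d'}$. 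Combining this with the previous lemma, which asserts that $G_{\bf d}(\tilde{Q})$ and $G_{\bf d'}(Q')$ have the same orbits on $R_{\bf d'}(Q') \simeq R_{\bf d}(Q)$, yields (1), (2), and (3) simultaneously.

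Step 2 (part (4)). I would invoke the standard formula $\dim M_{\bf d'}^{\rm ssimp}(Q') = \dim R_{\bf d'}(Q') - \dim(\mathrm{generic\ orbit})$. Under the hypothesis that a simple representation of dimension vector ${\bf d'}$ exists, Schur's lemma forces $\mathrm{End}(V) = \bC$ for such a simple $V$, so its stabilizer in $G_{\bf d'}(Q')$ is the one-dimensional subgroup of global scalars. The generic orbit therefore has dimension $\dim G_{\bf d'}(Q') - 1$, giving
$$\dim M_{\bf d'}^{\rm ssimp}(Q') = \dim R_{\bf d'}(Q') - \dim G_{\bf d'}(Q') + 1.$$
Using $R_{\bf d'}(Q') \simeq R_{\bf d}(Q)$ and $G_{\bf d'}(Q') \simeq \bC^* \times G_{\bf d}(\tilde{Q})$, the extra $+1$ absorbs the $\bC^*$ factor precisely, producing the claimed $\dim R_{\bf d}(Q) - \dim G_{\bf d}(\tilde{Q})$.

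Step 3 (part (5)). Rationality is the substantive step and the main obstacle. The approach I would take is a slice/no-name argument: on the open subvariety of simple representations of $Q'$ of dimension vector ${\bf d'}$, the projective group $PG_{\bf d'}(Q') = G_{\bf d'}(Q')/\bC^*$ acts freely, and this free locus descends to a dense open subset of $M_{\bf d'}^{\rm ssimp}(Q')$. Since $R_{\bf d'}(Q')$ is a rational linear representation of $G_{\bf d'}(Q')$, the no-name lemma provides a birational trivialization of the associated principal bundle, reducing rationality of the base to that of an affine space. Alternatively, I would appeal to the known rationality theorems for moduli of semisimple quiver representations. The edge case when no simple representation of dimension vector ${\bf d'}$ exists would be treated via a Luna-type decomposition, expressing the moduli as an iterated symmetric product of smaller moduli for which rationality is already established. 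Parts (1)--(4) are essentially a bookkeeping translation of Le Bruyn--Procesi through the deframing isomorphism, whereas (5) requires either setting up the birational principal-bundle argument carefully or citing a nontrivial external rationality result.
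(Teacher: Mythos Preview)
Your treatment of parts (1)--(4) is correct and matches the paper's approach: the first three are general GIT/Le Bruyn--Procesi facts combined with the deframing lemma, and your dimension count via Schur's lemma is equivalent to the paper's computation $\dim M = 1 - \langle {\bf d'},{\bf d'}\rangle_{Q'}$ (the Euler-form formula is precisely a repackaging of the generic-orbit argument you give).

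For part (5) you and the paper diverge. You propose setting up a no-name/slice argument on the simple locus, and flag this as the substantive obstacle, with a fallback of citing ``known rationality theorems'' and a separate Luna-type analysis when no simple exists. The paper bypasses all of this with a single observation you do not make: since $d'_\infty = 1$, the entries of ${\bf d'}$ automatically have $\gcd$ equal to one, and Schofield's rationality theorem \cite{Schofield01} for quiver moduli with coprime dimension vector then applies directly and unconditionally. This one-line citation covers every case, so your edge-case discussion for the non-simple situation is unnecessary, and the hands-on birational argument is not needed. Your no-name approach would work in principle, but recognizing the built-in coprimality from the deframing construction is exactly the payoff of passing to $Q'$.
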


\begin{proof}
Statement (1) is general, since the moduli space is defined as the ${\rm Spec}$ of the ring of invariants for a linear action of a reductive group on a vector space. The first part of Statement (2) is general, the second part follows from the observation in the previous section. Statement (3) is again general, the closed orbits in a representation variety correspond to the isoclasses of semisimple representations. Statement (4) follows from the following dimension calculation:
$$\dim M_{\bf d'}^{\rm ssimp}(Q')=1-\langle{\bf d'},{\bf d'}\rangle_{Q'}=\sum_{i\in \qt}(u_i+v_i)d_i-\langle{\bf\tilde d},{\bf\tilde d}\rangle_{\tilde{Q}}=\dim R_{\bf d}(Q)-\dim G_{\bf d}(\tilde{Q}).$$
Statement (5) follows immediately from a result of Schofield \cite{Schofield01} since the $\gcd$ of the entries of ${\bf d'}$ equals one.
\end{proof}

We now turn to coordinates for the moduli space $M_{\bf d'}^{\rm ssimp}(Q)$: again by general theory, they are given by traces along oriented cycles. Since $\tilde{Q}$ is acyclic, the only possible oriented cycles (wlog starting and ending in $\infty$) are given by $\gamma_{j,l}\omega\beta_{i,k}$ for a path $\omega:i\leadsto j$ from $i$ to $j$ in $\tilde{Q}$, an index $k=1,\ldots,u_i$, and an index $l=1,\ldots,w_i$. Thus, coordinates for the moduli space $M_{\bf d'}^{\rm ssimp}(Q')$ are provided by the functions $T_{\omega,k,l}$ mapping a representation $((V_\alpha),(v_{i,k}),(\varphi_{i,l}))$ to the scalar $\varphi_{j,l}V_\omega v_{i,k}$. This can be expressed as follows:

\begin{theorem}\label{t33} The moduli space $M_{\bf d'}^{\rm ssimp}(Q')$ is isomorphic to the image of the map
$$R_{\bf d}(Q)\simeq R_{\bf\tilde{d}}(\tilde{Q})\times\bigoplus_{i\in \qt}{\rm Hom}(U_i,V_i)\times\bigoplus_{i\in \qt}{\rm Hom}(V_i,W_i)\rightarrow\bigoplus_{\omega:i\leadsto j}{\rm Hom}(U_i,W_j)$$
given by
$$((V_\alpha),(f_i),(h_i))\mapsto (h_jV_\omega f_i)_{\omega:i\leadsto j}.$$
In other words, this map separates the closed orbits, equivalently, the isomorphism classes of semisimple representations. In particular, it separates isomorphism classes of simple representations.
\end{theorem}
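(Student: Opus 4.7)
The approach is to invoke the Le Bruyn--Procesi description of the invariant ring and then identify its generators with the coordinates of the asserted map.

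First, by the theorem of Le Bruyn and Procesi cited above, the coordinate ring $\mathbb{C}[R_{\bf d'}(Q')]^{G_{\bf d'}(Q')}$ is generated by the trace functions $T_\sigma$ associated to oriented cycles $\sigma$ in $Q'$. So the plan is to classify oriented cycles in $Q'$ up to the relations among their traces, and then read off the corresponding invariants explicitly.

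Next, I would classify these cycles. Since $\tilde{Q}$ is acyclic, any oriented cycle in $Q'$ must pass through the vertex $\infty$ at least once; after cyclic rotation we may assume the cycle starts and ends at $\infty$. The elementary cycles (those passing through $\infty$ exactly once) are precisely of the form $\gamma_{j,l}\,\omega\,\beta_{i,k}$ for a path $\omega:i\leadsto j$ in $\tilde{Q}$ and indices $k\in\{1,\dots,u_i\}$, $l\in\{1,\dots,w_j\}$. For a representation $((V_\alpha),(v_{i,k}),(\varphi_{j,l}))$ the trace of such a cycle is just the scalar $\varphi_{j,l}V_\omega v_{i,k}\in\mathbb{C}$, because $d'_\infty=1$. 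A general cycle visits $\infty$ several times and thus factors into a composition of elementary cycles; crucially, since $V_\infty$ is one-dimensional, the trace of this composition equals the \emph{product} of the traces of the elementary pieces. Hence the elementary cycle traces already generate the invariant ring.

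Third, I would translate these scalars into the linear-algebraic map of the statement. Using the chosen bases of $U_i$ and $W_j$, the scalar $\varphi_{j,l}V_\omega v_{i,k}$ is exactly the $(l,k)$-matrix entry of the composition $h_j V_\omega f_i\in\mathrm{Hom}(U_i,W_j)$. Therefore the collection of all functions $T_{\omega,k,l}$ is, after this identification, the collection of all coordinate functions on $\bigoplus_{\omega:i\leadsto j}\mathrm{Hom}(U_i,W_j)$ pulled back via the map $((V_\alpha),(f_i),(h_i))\mapsto(h_jV_\omega f_i)_\omega$. Consequently the two morphisms from $R_{\bf d}(Q)\simeq R_{\bf d'}(Q')$ to affine space, namely the quotient map onto $M^{\rm ssimp}_{\bf d'}(Q')$ and the map in the theorem, generate the same subalgebra of $\mathbb{C}[R_{\bf d'}(Q')]$, so they have the same Stein factorization and the image of the second is canonically isomorphic to the moduli space. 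The final clause about simples follows since isomorphism classes of simples form a (dense) subset of the closed orbits, which are separated by the invariants.

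The main obstacle is the product-of-traces observation in the second step: one has to verify carefully that non-elementary cycles give only redundant invariants, which works precisely because $d'_\infty=1$ forces the intermediate endomorphism of $V_\infty$ in every traversal to be multiplication by a scalar. Everything else is a direct translation of Le Bruyn--Procesi through the deframing isomorphism of Section~\ref{section:deframing}.
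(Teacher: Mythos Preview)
Your proposal is correct and follows essentially the same route as the paper: the argument in the paper is the short paragraph immediately preceding the theorem, which invokes Le Bruyn--Procesi, observes that cycles in $Q'$ are (up to rotation) of the form $\gamma_{j,l}\omega\beta_{i,k}$, and identifies their traces with the entries of $h_jV_\omega f_i$. If anything you are more careful than the paper, which simply asserts that these are ``the only possible oriented cycles'' without spelling out, as you do, that longer cycles through $\infty$ contribute only products of elementary traces because $d'_\infty=1$.
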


{\bf Example:} Let $Q$ be a quiver of extended Dynkin type ${\tilde{D}_4}$
for which we consider thin representations, that is, representations of dimension vector ${\bf d}=(1,1,1,1,1)$. We use the indexing
$$\begin{array}{ccccc}1&&&&4\\
&\searrow&&\nearrow&\\
&&3&&\\
&\nearrow&&\searrow&\\
2&&&&5\end{array}$$
of the vertices of $\tilde{Q}$ and consider $${\bf u}=(2,2,0,0,1),\; {\bf w}=(0,0,0,2,2).$$
A representation is thus given by the following datum:
$$\begin{array}{ccccccccc} U_1&\stackrel{\varphi}{\rightarrow}&\mathbb{C}&&&&\mathbb{C}&\stackrel{v}{\rightarrow}&W_4\\
&&&a\searrow&&\nearrow c&&&\\
&&&&\mathbb{C}&&&&\\
&&&b\nearrow&&\searrow d&&&\\
U_2&\stackrel{\psi}{\rightarrow}&\mathbb{C}&&&&\mathbb{C}&\stackrel{w}{\rightarrow}&W5\\
&&&&&&\uparrow\lambda&&\\
&&&&&&U_5&&\end{array}$$
for scalars $a,b,c,d,\lambda\in\mathbb{C}$, vectors $v,w\in\mathbb{C}^2$ and covectors $\varphi,\psi\in(\mathbb{C}^2)^*$. The above quotient map has target space
$${\rm Hom}(U_1,W_4)\oplus{\rm Hom}(U_1,W_5)\oplus{\rm Hom}(U_2,W_4)\oplus{\rm Hom}(U_2,W_5)\oplus{\rm Hom}(U_5,W_5),$$
which we can consider as embedded into
$${\rm Hom}(U_1\oplus U_2\oplus U_5,W_4\oplus W_5),$$
a space of $4\times 5$-matrices. The map itself assigns to the above datum the matrix
$$\left[\begin{array}{ccc}ac\cdot v\varphi&bc\cdot v\psi&0\\ ad\cdot w\varphi&bd\cdot w\psi&\lambda\cdot w\end{array}\right].$$

\section{Stability conditions}

We first comment on an alternative interpretation of ``double framed'' which, however, gives a different moduli space. In the above situation, define a quiver $Q''$ with set of vertices
$$Q''_0=\qt \cup\{0,\infty\},$$ and set of arrows
$$Q''_1=\tilde{Q}_1\cup\{\beta_{i,k}:0\rightarrow i\, |\, i\in \qt,\, k=1,\ldots,u_i\}\cup$$
$$\cup\{\gamma_{i,l}:i\rightarrow \infty\,|\, i\in \qt,\, l=1,\ldots,w_i\}.$$
We define a dimension vector ${\bf d''}$ for $Q''$ by
$$d''_i=d_i\mbox{ for }i\in \qt,\; d''_0=1,\; d''_\infty=1.$$
Then it is natural to consider representations which are $\Theta$-(semi-)stable for the stability function $\Theta_i=0\mbox{ for }i\in \qt,\; \Theta_0=1,\; \Theta_\infty=-1$. However, the expected (that is, in the presence of a stable representation) dimension of the moduli space $M_{\bf d''}^{\Theta-{\rm sst}}(Q'')$ is
$$\dim M_{\bf d''}^{\Theta-{\rm sst}}(Q'')=\dim R_{\bf d}(Q)-\dim G_{\bf d}(\tilde{Q})-1.$$
The reason is that the automorphism group of a stable representation of $\tilde{Q}$ consists just of the scalars, and these cannot compensate for separate dilations in the input arrows and the output arrows. The example of thin representations for a linearly oriented $A_3$ quiver shows this difference: If 
$$Q= i\rightarrow j\rightarrow k$$
then
$$Q'=\infty{\rightarrow\atop\leftarrow} j$$
whereas
$$Q''=0\rightarrow j\rightarrow\infty.$$
For $Q'$, we get an affine line as moduli space, whereas for $Q''$, the moduli space reduces to a single point.

Next, we make the condition of existence of simple representations for $Q'$ more explicit. First, it is easy to see from the definitions that the following holds:

\begin{lemma}\label{lemmasimple} A representation $((V_\alpha),(f_i),(h_i))$, where $(V_\alpha)$ is a representation of $\tilde{Q}$, defines a simple representation of $Q'$ if and only if the following holds:
\begin{itemize}
\item The largest $\tilde{Q}$-subrepresentation $V'\subset V$ such that $V'_i\subset{\rm Ker}(h_i)$ for all $i\in \qt$ is $0$.
\item The smallest $\tilde{Q}$-subrepresentation $V'\subset V$ such that ${\rm Im}(f_i)\subset V'_i$ for all $i\in \qt$ is $V$.
\end{itemize}
\end{lemma}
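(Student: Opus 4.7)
The plan is to unwind what a subrepresentation of $Q'$ of the given representation looks like, splitting on the value at the vertex $\infty$, and then to match each of the two conditions in the lemma with one of the two kinds of subrepresentation.

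Concretely, since $d'_\infty=1$, any subrepresentation $V'\subseteq V$ of $Q'$ has $V'_\infty$ equal to either $0$ or $\mathbb{C}$. In the first case the compatibility with the arrows $\beta_{i,k}:\infty\to i$ is automatic, while compatibility with the arrows $\gamma_{i,l}:i\to\infty$ forces $\varphi_{i,l}(V'_i)=0$ for all $i,l$, which is exactly $V'_i\subseteq\mathrm{Ker}(h_i)$; the remaining arrows of $\tilde{Q}$ give the condition that $(V'_i)_{i\in\qt}$ is a $\tilde{Q}$-subrepresentation. In the second case compatibility with the $\gamma_{i,l}$ is automatic, while compatibility with the $\beta_{i,k}$ forces $v_{i,k}\in V'_i$, i.e. $\mathrm{Im}(f_i)\subseteq V'_i$ for all $i$, and again $(V'_i)_i$ must be a $\tilde{Q}$-subrepresentation.

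Next I would observe that both classes of subrepresentations have extremal elements: the first class is closed under sums of $\tilde{Q}$-subrepresentations, so a unique largest element $V^{\max}$ with $V^{\max}_\infty=0$ and $V^{\max}_i\subseteq\mathrm{Ker}(h_i)$ exists; the second class is closed under intersections, so a unique smallest element $V^{\min}$ with $V^{\min}_\infty=\mathbb{C}$ and $\mathrm{Im}(f_i)\subseteq V^{\min}_i$ exists. The representation is simple iff its only subrepresentations are $0$ and $V$. The subrepresentation $0$ sits in the first class and $V$ itself sits in the second, so simplicity is equivalent to: no nonzero subrepresentation of the first class (equivalently $V^{\max}=0$) and no proper subrepresentation of the second class (equivalently $V^{\min}=V$). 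These are precisely the two bulleted conditions in the statement.

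I do not expect a genuine obstacle; the only thing that needs slight care is the bookkeeping that $V^{\max}$ and $V^{\min}$ are well defined and that no subrepresentation escapes the dichotomy on $V'_\infty$, both of which follow from $d'_\infty=1$ together with the sum/intersection closure observed above. Once the dictionary between subrepresentations of $Q'$ and pairs (a $\tilde{Q}$-subrepresentation, the value of $V'_\infty$) is set up, the proof is a direct translation.
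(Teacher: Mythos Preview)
Your proposal is correct and is exactly the natural argument the paper has in mind; the paper gives no proof beyond ``it is easy to see from the definitions,'' and your dichotomy on $V'_\infty\in\{0,\mathbb{C}\}$ together with the sum/intersection closure is precisely that unpacking.
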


One can also use the numerical criterion due to Le Bruyn and Procesi \cite{LeBruynProcesi90} to effectively decide existence of a simple representation:

\begin{theorem} Assume that $Q'$ is not a type $\tilde{A}_n$ quiver (equivalently, $\tilde{Q}$ is not a linearly oriented type $A$ quiver with a single input at the source and a single output at the sink).Then there exists a simple representation of $Q'$ of dimension vector ${\bf d'}$ if and only if the following holds:
\begin{itemize}
\item There exists $i\in \qt$ such that $d_i(u_i+w_i)\not=0$,
\item $u_i\geq\langle{\bf d},e_i\rangle_{\tilde{Q}}$ for all $i\in \qt$,
\item $w_i\geq\langle e_i,{\bf d}\rangle_{\tilde{Q}}$ for all $i\in \qt$.
\end{itemize}

In case $Q'$ is of type $\tilde{A}_n$, there exists a simple representation if and only $d_i=1$ for all $i\in \qt$.
\end{theorem}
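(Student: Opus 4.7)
My approach is to apply the numerical criterion of Le~Bruyn--Procesi \cite{LeBruynProcesi90} directly to the deframed quiver $(Q', {\bf d'})$ and translate each of its conditions back to data of $\tilde{Q}$. In the non-$\tilde{A}_n$ case, the criterion asserts that a simple representation of dimension vector ${\bf d'}$ exists iff the support of ${\bf d'}$ is strongly connected in $Q'$ and
$$\langle {\bf d'}, e_v\rangle_{Q'} \le 0, \qquad \langle e_v, {\bf d'}\rangle_{Q'} \le 0$$
at every vertex $v$ with $d'_v > 0$; on an oriented cycle, ${\bf d'}$ must instead be the thin dimension vector.

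The central computation evaluates these Euler pairings. At $i \in \qt$, the arrows of $Q'$ incident to $i$ consist of those of $\tilde{Q}$ together with the $u_i$ arrows $\beta_{i,k}$ and the $w_i$ arrows $\gamma_{i,l}$; since $d'_\infty = 1$, a direct count yields
$$\langle {\bf d'}, e_i\rangle_{Q'} = \langle {\bf d}, e_i\rangle_{\tilde{Q}} - u_i, \qquad \langle e_i, {\bf d'}\rangle_{Q'} = \langle e_i, {\bf d}\rangle_{\tilde{Q}} - w_i,$$
producing exactly the second and third bulleted inequalities. At $v = \infty$ the same calculation gives $\langle {\bf d'}, e_\infty\rangle_{Q'} = 1 - \sum_i w_i d_i$ and $\langle e_\infty, {\bf d'}\rangle_{Q'} = 1 - \sum_i u_i d_i$, so the conditions at $\infty$ amount to $\sum_i u_i d_i \ge 1$ and $\sum_i w_i d_i \ge 1$.

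I would then show that, in the presence of the two $\qt$-inequalities, these $\infty$-inequalities together with the strong connectedness of the support of ${\bf d'}$ follow from the first bulleted hypothesis. Let $j$ be a sink of the restriction of $\tilde{Q}$ to the support of ${\bf d}$: every $\tilde{Q}$-arrow out of $j$ lands outside the support, so $\langle e_j, {\bf d}\rangle_{\tilde{Q}} = d_j > 0$, which via the $w$-inequality forces $w_j \ge d_j \ge 1$. Hence $\sum_i w_i d_i \ge 1$, and the arrows $\gamma_{j,l}: j \to \infty$ yield a path in the support of ${\bf d'}$ from any supported vertex to $\infty$; a dual argument with a source of the support subquiver delivers $\sum_i u_i d_i \ge 1$ and the reverse paths, securing strong connectedness. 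Finally, $Q'$ is of type $\tilde{A}_n$ exactly when $\tilde{Q}$ is a linearly oriented type $A$ quiver with $u$ and $w$ each concentrated at a single vertex with value $1$, in which case Le~Bruyn--Procesi demands $d_i = 1$ for all $i \in \qt$. The main obstacle is the support-propagation step of this paragraph: one must verify that the sink and source chosen lie in $\qt$ and that the paths constructed stay inside the support of ${\bf d'}$; everything else is mechanical translation of Euler pairings.
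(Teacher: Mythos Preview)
Your proposal is correct and follows exactly the paper's approach: invoke the Le~Bruyn--Procesi criterion for $(Q',{\bf d'})$ and translate its conditions (strong connectedness of the support and the Euler-form inequalities at every vertex) back to $\tilde{Q}$. The paper's own proof consists of a single sentence stating that these conditions ``easily translate,'' so your Euler-form computations and the sink/source argument for strong connectedness simply fill in the details the paper omits.
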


\begin{proof}
    The Le Bruyn-Procesi criterion states that there exists a simple representation if and only if the support of ${\bf d'}$ is strongly connected (meaning that there exists an oriented path between arbitrary vertices) and $\langle {\bf d'},e_i\rangle_{Q'},\langle e_i,{\bf d'}\rangle_{Q'}\leq 0$ for all $i\in Q'_0$. These easily translate to the conditions in the theorem.
\end{proof} 

In particular, these numerical criteria automatically hold if ${\bf\tilde{d}}$ is thin, resulting in the following.

\begin{corollary} Assume that $d_i=1$ for all $i\in \qt$. Then the moduli space is an affine irreducible rational toric variety of dimension $|Q_1|-|\qt|$.
\end{corollary}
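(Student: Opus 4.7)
The plan is to split the five claimed properties into those already packaged by the preceding theorem of Section~\ref{sec:trivialstability} (affine, irreducible, rational, and the dimension formula) and the genuinely new toric assertion, which needs a short separate argument.

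First I would verify that the numerical criterion of the preceding theorem is automatic when $\mathbf{d}$ is thin, so that a simple representation of $Q'$ of dimension vector $\mathbf{d}'$ exists. The inequality $u_i\geq\langle\mathbf{d},e_i\rangle_{\tilde Q}=1-\#\{\alpha\in\tilde Q_1\mid t(\alpha)=i\}$ rearranges to the statement that the total in-degree of $i$ in $Q$ is at least one, which holds since $i\in\qt$ is by definition not a source of $Q$; the dual inequality for $w_i$ is symmetric. The nonvanishing condition is satisfied at any source of $\tilde Q$ (which must receive some arrow from a source of $Q$, so $u_i>0$), and in the exceptional $\tilde A_n$ case thinness is exactly the stated condition. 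Invoking the preceding theorem then yields affineness, irreducibility, rationality, and the dimension $\dim R_{\mathbf{d}}(Q)-\dim G_{\mathbf{d}}(\tilde Q)=|Q_1|-|\qt|$, since every $\operatorname{Hom}(V_i,V_j)$ is one-dimensional and every $\operatorname{GL}(V_i)$ has dimension $1$.

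For the toric assertion, the central observation is that thinness makes the base change group $G_{\mathbf{d}}(\tilde Q)\cong(\mathbb{C}^*)^{|\qt|}$ already an algebraic torus, acting diagonally on the affine space $R_{\mathbf{d}}(Q)\cong\mathbb{C}^{|Q_1|}$: each arrow coordinate, indexed by $\alpha:i\to j$, is scaled by the character $g_jg_i^{-1}$ (with $g_k=1$ when $k$ is a source or sink of $Q$). I would then apply the standard fact from toric geometry that the invariant ring of a diagonal action of an algebraic torus on affine space is the semigroup algebra of a finitely generated saturated subsemigroup of the character lattice of the ambient coordinate torus $(\mathbb{C}^*)^{|Q_1|}$. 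Hence $\operatorname{Spec}$ of this invariant ring is by definition an affine toric variety, carrying the open dense action of the quotient torus $(\mathbb{C}^*)^{|Q_1|}/G_{\mathbf{d}}(\tilde Q)$.

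The main obstacle is really only bookkeeping: all genuinely geometric content (irreducibility, rationality, dimension) is inherited from the preceding theorem, and the toric structure is immediate from the standard theory of torus quotients of affine space once one notes that thinness turns $G_{\mathbf{d}}(\tilde Q)$ itself into a torus. The slightly delicate point is that the dimension of the residual torus matches the previously computed $|Q_1|-|\qt|$; this is automatic because the affine toric variety arising from a torus quotient of affine space has dimension equal to the codimension in the character lattice of the sublattice cut out by the kernel of the character map, and the first paragraph identifies this codimension with $|Q_1|-|\qt|$.
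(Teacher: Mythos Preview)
Your proof is correct and follows essentially the same strategy as the paper: defer the affine, irreducible, rational, and dimension claims to the preceding results (you are just more explicit than the paper in checking that the Le~Bruyn--Procesi criterion is automatic in the thin case), and then give a separate short argument for toricity. The only difference is one of phrasing in the toric step: the paper argues directly that the arrow-rescaling torus acts on the quotient with a dense orbit (the image of the locus where all arrow scalars are nonzero), whereas you argue on the invariant-ring side that a diagonal torus action on affine space has a semigroup algebra as invariant ring; these are the two standard and equivalent ways to see that a quotient of affine space by a subtorus is toric, and your identification of the residual torus $(\mathbb{C}^*)^{|Q_1|}/G_{\mathbf d}(\tilde Q)$ is exactly the torus the paper invokes.
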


\begin{proof}
    All statements are special cases of the previous results, except toricity. This is a general property of moduli spaces of thin representations, since the torus rescaling the arrows acts with a dense orbit (namely, all arrows being represented by non-zero scalars) on the quotient.
\end{proof}


We next introduce a variant of the moduli space $M_{\bf d'}^{\rm ssimp}(Q')$ which has the advantage of always being smooth. We introduce a stability function $\Theta$ for $Q'$ by
$$\Theta_\infty=\dim{\bf \tilde{d}},\; \Theta_i=-1\mbox{ for all }i\in \qt.$$
Here $\dim{\bf\tilde{d}}=\sum_{i\in \qt}d_i$, and thus $\Theta({\bf d'})=0$. We work with the convention (opposite to King's \cite{King94}) that a representation is semi-stable if $\Theta(U)\leq 0$ for all subrepresentations.


\begin{lemma}\label{lemmasemistable} A representation $((V_\alpha),(f_i),(h_i))$ is $\Theta$-semistable iff it is $\Theta$-stable iff the smallest $\tilde{Q}$-subrepresentation $V'\subset V$ such that ${\rm Im}(f_i)\subset V'_i$ for all $i\in \qt$ equals $V$.
\end{lemma}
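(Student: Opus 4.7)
The plan is to classify all subrepresentations $V'$ of the $Q'$-representation $V$ by the one-dimensional value of $V'_\infty \subset V_\infty = \mathbb{C}$, which must be either $0$ or $\mathbb{C}$, and then compute $\Theta(V')$ in each case. The additional arrows $\beta_{i,k}:\infty\to i$ and $\gamma_{i,l}:i\to\infty$ translate into compatibility conditions linking the $\tilde{Q}$-subrepresentation $(V'_i)_{i\in\qt}$ with $V'_\infty$ via the maps $f_i$ and $h_i$.

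First I would spell out these two types. If $V'_\infty=0$, then the restriction of $\gamma_{i,l}$ forces $V'_i\subset\operatorname{Ker}(h_i)$ for each $i\in\qt$, while the restriction of $\beta_{i,k}$ is automatic; call such a subrep of \emph{Type A}. If $V'_\infty=\mathbb{C}$, then the restriction of $\beta_{i,k}$ forces $\operatorname{Im}(f_i)\subset V'_i$ for each $i$, while $\gamma_{i,l}$ is automatic; call this \emph{Type B}. In either case $(V'_i)_{i\in\qt}$ is, first of all, a subrepresentation of $V$ restricted to $\tilde{Q}$.

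Now I would just compute $\Theta(V')$ using $\Theta_\infty=\dim\tilde{\mathbf{d}}$ and $\Theta_i=-1$ on $\qt$:
\begin{itemize}
\item For Type A, $\Theta(V')=-\sum_{i\in\qt}\dim V'_i\leq 0$, with equality iff $V'=0$.
\item For Type B, $\Theta(V')=\dim\tilde{\mathbf{d}}-\sum_{i\in\qt}\dim V'_i\geq 0$, with equality iff $V'_i=V_i$ for all $i\in\qt$, i.e.\ iff $V'=V$.
\end{itemize}
Neither calculation is deep; the only thing to verify is the combinatorics of the stability weight, which the definition makes immediate.

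From this the lemma falls out. The non-trivial proper subrepresentations with $\Theta(V')>0$ are exactly the proper Type B subrepresentations, i.e.\ proper $\tilde{Q}$-subrepresentations of $V$ containing all the images $\operatorname{Im}(f_i)$. Hence $V$ is $\Theta$-semistable precisely when no such proper Type B subrepresentation exists, equivalently when the smallest $\tilde{Q}$-subrepresentation $V'\subset V$ with $\operatorname{Im}(f_i)\subset V'_i$ for all $i\in\qt$ is $V$ itself. Moreover the equality $\Theta(V')=0$ only occurs for $V'=0$ or $V'=V$, so the stability and semistability conditions coincide. The only step that requires care is the bookkeeping in Step~1 (correctly identifying which subrepresentation conditions become vacuous and which become nontrivial depending on $V'_\infty$); once that is settled, the rest is direct arithmetic, and I do not foresee a real obstacle.
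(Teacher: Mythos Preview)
Your argument is correct. The paper states this lemma without proof (treating it, like Lemma~\ref{lemmasimple}, as immediate from the definitions), so there is no proof to compare against; your case analysis on whether $V'_\infty=0$ or $V'_\infty=\mathbb{C}$, together with the resulting computation of $\Theta(V')$, is precisely the natural verification the paper leaves to the reader.
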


The general theory of moduli spaces of (semi-)stable representations of quivers yields:

\begin{theorem} In the present situation, we have the following:
\begin{enumerate}
\item The moduli space $M_{\bf d'}^{\Theta-{\rm sst}}(Q')$ of $\Theta$-(semi-)stable representations of $Q'$ of dimension vector ${\bf d'}$ is a {\it smooth} irreducible quasiprojective variety.
\item It parametrizes the isomorphism classes of stable representations of $Q'$ of dimension vector ${\bf d'}$.
\item It has dimension $$\dim R_{\bf d}(Q)-\dim G_{\bf d}(\tilde{Q})$$
if there exists a $\Theta$-(semi-)stable representation of $Q'$ of dimension vector ${\bf d'}$.
\item It is a rational variety.
\item It admits a projective morphism $$M_{\bf d'}^{\Theta-{\rm sst}}(Q')\rightarrow M_{\bf d'}^{\rm ssimp}(Q').$$
\item It is toric in case ${\bf \tilde{d}}$ is thin.
\end{enumerate}
\end{theorem}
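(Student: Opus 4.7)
The plan is to deduce all six statements from King's GIT construction of moduli of $\Theta$-semistable quiver representations \cite{King94}, applied to $(Q',{\bf d'})$, combined with the collapse of stability and semistability established in Lemma \ref{lemmasemistable}.

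For (1), I would realize $M_{\bf d'}^{\Theta-{\rm sst}}(Q')$ as the GIT quotient of the open semistable locus in $R_{\bf d'}(Q')$ by $G_{\bf d'}(Q')$ linearised by the character $\Theta$. Quasiprojectivity is built into this construction. Since Lemma \ref{lemmasemistable} says every $\Theta$-semistable point is in fact $\Theta$-stable, the quotient is a geometric quotient, and on the stable locus $G_{\bf d'}(Q')$ acts freely modulo the diagonal $\mathbb{C}^*$ (stable representations have only scalar automorphisms). A free action of a reductive group on a smooth irreducible variety yields a smooth irreducible quotient, giving (1).

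Statement (2) is then immediate: no S-equivalence identifications take place, so closed points correspond bijectively to isomorphism classes of stable representations. For (3), whenever the moduli space is nonempty, it has dimension $1-\langle {\bf d'},{\bf d'}\rangle_{Q'}$, and the calculation already carried out in the proof of Theorem \ref{t33} shows this quantity equals $\dim R_{\bf d}(Q)-\dim G_{\bf d}(\tilde{Q})$. For (4), I would invoke Schofield's rationality theorem \cite{Schofield01}, applicable because $d'_\infty=1$ forces $\gcd({\bf d'})=1$.

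Statement (5) is the standard GIT comparison morphism: the ring of $\Theta$-semi-invariants is graded by nonnegative powers of $\Theta$, its degree-zero piece is the ordinary invariant ring whose ${\rm Spec}$ is $M_{\bf d'}^{\rm ssimp}(Q')$, and the inclusion of graded rings induces a projective morphism $M_{\bf d'}^{\Theta-{\rm sst}}(Q')\to M_{\bf d'}^{\rm ssimp}(Q')$. Finally (6) is handled exactly as in the corollary preceding this theorem: the argument there only uses that the torus of diagonal arrow rescalings commutes with $G_{\bf d'}(Q')$ and hence descends to the quotient with a dense orbit, which is independent of whether the quotient is taken with respect to the trivial or the $\Theta$-stability. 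The main bookkeeping point is to ensure that the dimension identity and smoothness statements transport cleanly between the $(Q',{\bf d'})$-picture and the original $(Q,{\bf d})$-picture, but the equivalence of actions established in Section \ref{section:deframing} makes this routine.
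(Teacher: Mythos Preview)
Your proposal is correct and follows the same approach as the paper, which simply writes ``Smoothness follows since stability and semi-stability coincide. All other properties are general.'' You have supplied the details behind that sentence. One small correction: the dimension identity $1-\langle{\bf d'},{\bf d'}\rangle_{Q'}=\dim R_{\bf d}(Q)-\dim G_{\bf d}(\tilde{Q})$ is not in the proof of Theorem~\ref{t33} but in the proof of the preceding theorem on $M_{\bf d'}^{\rm ssimp}(Q')$ (item (4) there).
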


\begin{proof}
    Smoothness follows since stability and semi-stability coincide. All other properties are general.
\end{proof} 

So the advantage of this moduli space is its smoothness, and the fact that it separates all the  isoclasses of stable representations at the cost of having to work with quasiprojective varieties, that is, a mixture of affine and projective coordinates. 

\section{Representation-theoretic description of the moduli spaces}

To shorten notation, we denote by $\mathcal{M}=M_{\bf d'}^{\rm ssimp}(Q')$ the moduli space parametrizing iso-classes of semisimple representations of $Q'$ of dimension vector ${\bf d'}$, let $\mathcal{M}'\subset\mathcal{M}$ be the open subset parametrizing simple representations, and let $\widetilde{\mathcal{M}}$ be the moduli space $M_{\bf d'}^{\Theta-{\rm sst}}(Q')$ parametrizing $\Theta$-stable representations. We give an explicit description of these spaces in more representation-theoretic terms. First we need to recall some notation. We refer the reader to \cite{Schiffler14} for the general concepts of representation theory.\\[2ex]
For a map $q\in{\rm Hom}_{\tilde{Q}}(V,W)$ between representations $V$, $W$ of $\tilde{Q}$, define its rank vector ${\bf r}(q)={\rm\bf dim}\,{\rm Im}(q)$ as the dimension vector (for $\tilde{Q}$) of its image. For a representation $V$ of $\tilde{Q}$ and a dimension vector ${\bf e}\leq{\rm\bf dim}\, V$, let ${\rm Gr}^{\bf e}(V)$ be the Grassmannian (see \cite{Cerulli}) of subrepresentations of dimension vector ${\rm\bf dim}\, V-{\bf e}$.\\[1ex]
For a vertex $i\in \qt$, let $P_i$ be the indecomposable projective representation associated to $i$, and let $I_i$ be the indecomposable injective representation associated to $i$. In the present setup, define
$$P=\bigoplus_{i\in \qt}P_i^{u_i},\; I=\bigoplus_{i\in \qt}I_i^{w_i}.$$

\begin{theorem}\label{thm:description-of-moduli-space} In the present situation, we have:
\begin{enumerate}
\item The moduli space $\mathcal{M}$ is isomorphic to the closed subset of the vector space ${\rm Hom}_{\tilde{Q}}(P,I)$ of maps $q$ with rank vector ${\bf r}(q)\leq{\bf\tilde{d}}$.
\item Under this isomorphism, the moduli space $\mathcal{M}'$ corresponds to the locally closed subset of maps $q\in{\rm Hom}_{\tilde{Q}}(P,I)$ with rank vector ${\bf r}(q)={\bf\tilde{d}}$.
\item The moduli space $\widetilde{\mathcal{M}}$ is isomorphic to the closed subset of the quasiprojective variety ${\rm Gr}^{\bf\tilde{d}}(P)\times{\rm Hom}_{\tilde{Q}}(P,I)$ of pairs $(U,q)$ such that $U\subset{\rm Ker}(q)$.
\item $\widetilde{\mathcal{M}}$ is a resolution of singularities of $\mathcal{M}$, which is an isomorphism over $\mathcal{M}'$, with all singular fibres being quiver Grassmannians for projective representations of $\tilde{Q}$.
\item $\widetilde{\mathcal{M}}$ is a vector bundle over ${\rm Gr}^{\bf\tilde{d}}(P)$.
\item Consequently, the Betti numbers of $\widetilde{\mathcal{M}}$ can be determined explicitely. In the thin case, $\widetilde{\mathcal{M}}$ admits an affine paving.
\end{enumerate}
\end{theorem}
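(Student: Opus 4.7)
The plan is to use the projective/injective adjunctions for $\tilde{Q}$-representations to repackage the framing data $(f_i)_i$ and $(h_i)_i$ as single $\tilde{Q}$-morphisms, and then to identify the coordinate-ring map of Theorem \ref{t33} with a composition. Concretely, ${\rm Hom}_{\tilde{Q}}(P_i, V) \cong V_i$ assembles $(f_i : U_i \to V_i)_i$ into $f : P \to V$, and dually $(h_i : V_i \to W_i)_i$ into $h : V \to I$. Using the path basis of ${\rm Hom}_{\tilde{Q}}(P_i, I_j)$, one identifies ${\rm Hom}_{\tilde{Q}}(P, I)$ with $\bigoplus_{\omega : i \leadsto j}{\rm Hom}(U_i, W_j)$, and the $(U_i, W_j)$-component of $h \circ f$ along a path $\omega$ equals $h_j V_\omega f_i$; this coincides exactly with the map of Theorem \ref{t33}. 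Hence $\mathcal{M}$ is the image of $(V, f, h) \mapsto h \circ f$.

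For (1), containment of this image in the rank $\leq {\bf\tilde{d}}$ locus is immediate from ${\rm Im}(h \circ f) \subseteq h(V)$; conversely, any $q$ with ${\bf r}(q) \leq {\bf\tilde{d}}$ factors as $P \twoheadrightarrow {\rm Im}(q) \hookrightarrow I$, and one enlarges ${\rm Im}(q)$ to a $\tilde{Q}$-representation $V$ of dimension vector ${\bf\tilde{d}}$ and extends the second factor to $V \to I$ using ${\rm Ext}^1(-, I) = 0$ (which holds since $\tilde{Q}$ is hereditary and $I$ is injective). For (2), Lemma \ref{lemmasimple} translates simplicity to the conjunction ``$f$ surjective and $h$ injective'', which is exactly the condition ${\bf r}(q) = {\bf\tilde{d}}$.

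For (3) and (5), Lemma \ref{lemmasemistable} identifies the $\Theta$-stable triples with those for which $f : P \to V$ is surjective. Sending $(V, f, h)$ to $(K, \tilde{h}) := (\ker f, h \circ f)$ yields a bijection with pairs $(K, \tilde{h}) \in {\rm Gr}^{\bf\tilde{d}}(P) \times {\rm Hom}_{\tilde{Q}}(P, I)$ satisfying $\tilde{h}|_K = 0$, the inverse being $V := P/K$, $f$ the quotient, $h$ induced by $\tilde{h}$. The projection $(K, \tilde{h}) \mapsto K$ realizes $\widetilde{\mathcal{M}}$ as the kernel of the restriction morphism of bundles ${\rm Hom}_{\tilde{Q}}(P, I) \to {\rm Hom}_{\tilde{Q}}(\mathcal{K}, I)$ over ${\rm Gr}^{\bf\tilde{d}}(P)$, where $\mathcal{K}$ is the tautological subbundle. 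Constancy of the fiber dimension $\langle{\bf\tilde{d}}, \dim I\rangle_{\tilde{Q}}$ follows from ${\rm Ext}^1(P/K, I) = 0$, yielding the genuine vector bundle of (5).

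For (4), $(K, \tilde{h}) \mapsto \tilde{h}$ is projective because ${\rm Gr}^{\bf\tilde{d}}(P)$ is proper; its fiber over $q \in \mathcal{M}'$ is the single point $\ker(q)$, establishing the isomorphism on the simple locus, while over $q$ with ${\bf r}(q) < {\bf\tilde{d}}$ the fiber is the quiver Grassmannian of subrepresentations of $\ker(q)$ of dimension $\dim P - {\bf\tilde{d}}$. Since subrepresentations of projectives over a hereditary algebra are again projective, $\ker(q)$ is projective and these fibers are quiver Grassmannians for projective representations. Smoothness of $\widetilde{\mathcal{M}}$ follows from the vector bundle structure over ${\rm Gr}^{\bf\tilde{d}}(P)$, the latter being smooth since $P$ is rigid. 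Finally, (6) is obtained by pullback: Betti numbers of $\widetilde{\mathcal{M}}$ coincide with those of ${\rm Gr}^{\bf\tilde{d}}(P)$, which are computable by the standard techniques for quiver Grassmannians, and in the thin case the known Schubert-type affine paving of ${\rm Gr}^{\bf\tilde{d}}(P)$ lifts through the vector bundle to an affine paving of $\widetilde{\mathcal{M}}$. The main obstacle I anticipate is the rank-vector analysis in (1): one must confirm that the hereditary property of $\tilde{Q}$ genuinely suffices to extend $({\rm Im}(q) \hookrightarrow I)$ to a map $V \to I$ with $\dim V = {\bf\tilde{d}}$, a scheme-theoretic verification that requires care with the enlargement choice and with the injectivity of $I$.
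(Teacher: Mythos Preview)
Your proposal is correct and follows essentially the same approach as the paper: repackage the framing data as $\tilde{Q}$-morphisms $f:P\to V$ and $h:V\to I$, identify the quotient map with $(V,f,h)\mapsto h\circ f$, describe the image via rank conditions, and handle $\widetilde{\mathcal{M}}$ via $(\ker f,\, h\circ f)$. Two minor deviations are worth noting. First, for surjectivity in (1) the paper simply takes $V={\rm Im}(q)\oplus C$ with $C$ an arbitrary representation of the complementary dimension vector and sets $f=f'\oplus 0$, $h=h'\oplus 0$; this sidesteps entirely the extension issue you flagged as your ``main obstacle,'' so no appeal to ${\rm Ext}^1(-,I)=0$ is needed. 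Second, rather than invoking Theorem~\ref{t33} for the quotient property, the paper re-derives it by explicitly characterizing the semisimple $Q'$-representations as triples $P\xrightarrow{f'\oplus 0}W\oplus S\xrightarrow{h'\oplus 0}I$ with $S$ semisimple, $f'$ surjective, $h'$ injective, and then showing each fibre of $\pi$ contains a unique such closed orbit; your shortcut via Theorem~\ref{t33} is legitimate, but the explicit description is reused later in Section~\ref{sec:monoidal}.
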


\begin{proof}
Since
$${\rm Hom}_{\tilde{Q}}(P_i,V)\simeq V_i,\; {\rm Hom}_{\tilde{Q}}(V,I_i)\simeq V_i^*,$$
we can reinterpret Lemma \ref{l22} as follows. The points of the representation space $R_{\bf d}(Q)$ can be identified with triples $(V,f,h)$ consisting of a representation $V$ of $\tilde{Q}$ of dimension vector ${\bf \tilde{d}}$, a map $f:P\rightarrow V$, and a map $h:V\rightarrow I$. The action of $G_{\bf d}(\tilde{Q})$ is still given by the one described in Lemma \ref{l22}. From this description, it is obvious that
$$\pi:R_{\bf d}(Q)\rightarrow{\rm Hom}_{\tilde{Q}}(P,I),$$
$$(V,f,h)\mapsto h\circ f$$
is $G_{\bf d}(\tilde{Q})$-invariant. It is also clear that ${\bf r}(h\circ f)\leq{\rm\bf dim}\, V={\bf \tilde{d}}$, thus the image of $\pi$ is contained in the closed subset of maps with rank vector at most ${\bf\tilde{d}}$.

We first claim that this condition already describes the image completely. Namely, let $q\in{\rm Hom}_{\tilde{Q}}(P,I)$ with ${\bf r}(q)\leq{\rm\tilde{d}}$ be given. We consider its canonical factorization over its image
$$P\stackrel{f'}{\rightarrow}{\rm Im}(q)\stackrel{h'}{\rightarrow}I,$$
choose an arbitrary representation $C$ of dimension vector ${\bf\tilde{d}}-{\bf r}(q)$, define $V={\rm Im}(q)\oplus C$ (which has dimension vector ${\bf\tilde{d}}$), and define maps $f$ and $h$ by
$$f=f'\oplus 0:P\rightarrow V,\; h=h'\oplus 0:V\rightarrow I.$$
Then obviously $q=h\circ f$, thus $q$ is contained in the image of $\pi$.

Next we claim that $\pi$ is the quotient map for the $G_{\bf d}(\tilde{Q})$-action, that is, it separates closed orbits. Closed orbits corresponding to isoclasses of semisimple representations of $Q'$, we first describe these. First we note that the two conditions for simplicity of Lemma \ref{lemmasimple} directly translate into the map $f:P\rightarrow V$ being surjective, and the map $h:V\rightarrow I$ being injective. From this it follows easily that a triple $(V,f,h)$ has a closed orbit under $G_{\bf d}(\tilde{Q})$, equivalently corresponds to a semisimple representation of $Q'$, if and only if
\begin{itemize}
\item $V=W\oplus S$ for $S$ a semisimple representation of $\tilde{Q}$,
\item $f$ maps $P$ surjectively to $W$,
\item $h$ vanishes on $S$ and is injective on $W$.
\end{itemize}
In other words, the triple $P\stackrel{f}{\rightarrow}V\stackrel{h}{\rightarrow}I$ is of the form
$$P\stackrel{f'\oplus 0}{\rightarrow}W\oplus S\stackrel{h'\oplus 0}{\rightarrow}I$$
with $S$ semisimple, $f'$ surjective, and $h'$ injective.

Now we take again $q$ in the image of $\pi$. A closed orbit in its fibre is thus a triple as above such that $q=h\circ f$. But this forces $W$ to be ${\rm Im}(q)$, which is thus unique up to isomorphism, and forces $S$ to be the unique semisimple ($\tilde{Q}$ being acyclic) of dimension vector ${\bf\tilde{d}}-{\rm\bf dim}\, W$. This proves uniqueness of the closed orbit.

This proves part (1) of the theorem, and it also proves part (2) since the simple representations correspond precisely to the triples $(V,f,h)$ with $f$ surjective and $h$ injective, that is, ${\bf r}(h\circ f)={\bf\tilde{d}}$.

To prove part (3), we first note that Lemma \ref{lemmasemistable} translates to the following: a triple $(V,f,h)$ is $\Theta$-(semi-)stable iff $f$ is surjective. We can thus define a map from the semistable locus of $R_{\bf d}(Q)$ to ${\rm Gr}^{\bf\tilde{d}}(P)\times{\rm Hom}_{\tilde{Q}}(P,I)$ as follows:
$$\tilde{\pi}:(V,f,h)\mapsto ({\rm Ker}(f),h\circ f).$$
Obviously, ${\rm Ker}(f)\subset{\rm Ker}(h\circ f)$, thus the defining property of the subset defined in part (3) of the theorem is fulfilled. Again, $\tilde{\pi}$ is obviously $G_{\bf d}(\tilde{Q})$-invariant.

To prove that it is an isomorphism, take a pair $(U,q)$ such tat $U\subset{\rm Ker}(q)$. We then define $V=P/U$, define $f:P\rightarrow P/U=V$ to be the projection map, and define $h$ as the map $h:V=P/U\rightarrow I$ induced by $q:P\rightarrow I$. Then obviously ${\rm Ker}(f)=U$ and $h\circ f=q$, that is, $\tilde{\pi}(V,f,h)=(U,q)$. But it is also clear that this is the only possible choice (up to the $G_{\bf d}(\tilde{Q})$-action) given that $f$ has to be surjective. This proves that the fibre of any point in the image of $\tilde{\pi}$ consists of a unique orbit of a semistable representations, proving (3).

The first claim of part (4) is general. To analyse the fibres of the morphism $\widetilde{\mathcal{M}}\rightarrow\mathcal{M}$, we just have to fix $q$ and ask for the possible choices of $U\in{\rm Gr}^{\bf\tilde{d}}(P)$ such that $U\subset{\rm Ker}(q)$. But ${\rm Ker}(q)$ is again a projective representation of $\tilde{Q}$ (since the path algebra is hereditary), and thus the possible choices are encoded in the quiver Grassmannian ${\rm Gr}^{\bf e}({\rm Ker}(q))$ for a suitable ${\bf e}$.

For part (5), we fix $U\subset P$ and ask for the possible choices of $q$ such that $U\subset{\rm Ker}(q)$. But these are precisely encoded in ${\rm Hom}(P/U,I)$, which is a vector space whose dimension ($I$ being injective) does only depend on the dimension vector of $U$, that is, is constant along the quiver Grassmannian. This proves the vector bundle property.

Part (6) follows from part (5) once the claimed properties are true for ${\rm Gr}^{\bf\tilde{d}}(P)$, and this follows from \cite{Rei}. The theorem is proved.
    
\end{proof}



We will now translate this description of the moduli spaces back to linear algebra. To do this, we will identify ${\rm Hom}(P,I)$ with $\bigoplus_{\omega:i\leadsto j}{\rm Hom}(U_i,W_j)$ (considered as the target of the quotient map earlier) in such a way that the rank conditions of the previous theorem can be made explicit.


Recall the space $\bigoplus_{\omega:i\leadsto j}{\rm Hom}(U_i,W_j)$ of tuples of linear maps $(q_\omega)_\omega$. For every vertex $i\in \qt$, we can consider the map formed by all paths passing through $i$:
$$q^{(i)}=\bigoplus_{\omega:j\leadsto i}\bigoplus_{\omega':i\leadsto k}q_{\omega'\omega}:\bigoplus_{\omega_j\leadsto i}U_j\rightarrow\bigoplus_{\omega':i\leadsto k}W_k.$$

For a vector space $V$ and $k\leq\dim V$, we denote by ${\rm Gr}^q(V)$ the Grassmannian of {\it $k$-codimensional} subspaces of $V$. For every arrow $\alpha:i\rightarrow i'$ in $\tilde{Q}$, we have a natural map
$$p_\alpha:\bigoplus_{\omega:j\leadsto i}U_j\rightarrow\bigoplus_{\omega:j\leadsto i'}U_j$$
which places an element $u_\omega\in U_j$ corresponding to a path $\omega:j\leadsto i$ in the component corresponding to the path $\alpha\omega$, that is,
$$p_\alpha((u_\omega)_\omega)=(\left\{\begin{array}{ccc}u_{\omega'}&,&\omega=\alpha\omega'\\ 0&,&\mbox{otherwise}\end{array}\right\})_\omega.$$

\begin{corollary}~ \begin{enumerate}
\item The moduli space $\mathcal{M}$ is isomorphic to the closed subset of $\bigoplus_{\omega:i\leadsto j}{\rm Hom}(U_i,W_j)$ of tuples of linear maps $(q_\omega)_\omega$ such that for all $i\in \qt$, the map
$q^{(i)}$ has rank at most $d_i$. 
\item Under this isomorphism, the moduli space $\mathcal{M}'$ corresponds to tuples where all $q^{(i)}$ have rank $d_i$.
\item The moduli space $\widetilde{\mathcal{M}}$ is isomorphic to the closed subset of
$$\prod_{i\in \qt}{\rm Gr}^{d_i}(\bigoplus_{\omega:j\leadsto i}U_j)\times\bigoplus_{\omega:i\leadsto j}{\rm Hom}(U_i,W_j)$$
of tuples $((V'_i)_i,(q_\omega)_\omega)$ such that the following holds:
\begin{enumerate}
\item for all arrows $\alpha:i\rightarrow i'$ in $\tilde{Q}$, we have $p_\alpha(V'_i)\subset V'_{i'}$,
\item for all $i\in \qt$, we have $V'_i\subset{\rm Ker}(q^{(i)})$.
\end{enumerate}
\end{enumerate}
\end{corollary}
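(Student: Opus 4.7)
The plan is to deduce the corollary directly from Theorem~\ref{thm:description-of-moduli-space} by making two vertexwise identifications explicit: first identify ${\rm Hom}_{\tilde{Q}}(P,I)$ with the space of tuples $(q_\omega)_\omega$, and then identify the rank vector condition $\mathbf{r}(q)\le\tilde{\mathbf d}$ (resp.\ the subrepresentation $U\subset P$) with the vertexwise data appearing on the right.

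First I would record the basic computation of $\operatorname{Hom}$-spaces. Using the adjunctions ${\rm Hom}_{\tilde Q}(P_i,V)\simeq V_i$ and ${\rm Hom}_{\tilde Q}(V,I_j)\simeq V_j^*$ (as quoted in the proof of Theorem~\ref{thm:description-of-moduli-space}), one obtains ${\rm Hom}_{\tilde Q}(P_i,I_j)\simeq(I_j)_i$, whose dimension is the number of paths $\omega:i\leadsto j$ in $\tilde Q$. Taking multiplicities $u_i$, $w_j$ into account gives the canonical identification
$${\rm Hom}_{\tilde Q}(P,I)\;\simeq\;\bigoplus_{\omega:i\leadsto j}{\rm Hom}(U_i,W_j),\qquad q\leftrightarrow(q_\omega)_\omega.$$
The key point I would then verify is that, under this identification, the evaluation of $q$ at a vertex $i\in\qt$ is precisely the map $q^{(i)}$ defined in the corollary. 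Indeed, $(P)_i=\bigoplus_{\omega:j\leadsto i}U_j$ and $(I)_i=\bigoplus_{\omega':i\leadsto k}W_k$ (one summand per path), and the block of $q_i$ from the $U_j$-summand indexed by $\omega$ to the $W_k$-summand indexed by $\omega'$ is $q_{\omega'\omega}$, which is exactly the defining block of $q^{(i)}$.

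Given this identification, parts (1) and (2) are immediate: the condition $\mathbf{r}(q)\le\tilde{\mathbf d}$ on rank vectors translates, vertex by vertex, into $\dim\operatorname{Im}(q_i)\le d_i$, i.e.\ $\operatorname{rk}(q^{(i)})\le d_i$, with equality characterising $\mathcal M'$. Thus Theorem~\ref{thm:description-of-moduli-space}(1)--(2) give the desired description of $\mathcal M$ and $\mathcal M'$.

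For part (3) I would spell out ${\rm Gr}^{\tilde{\mathbf d}}(P)$ in the same vertexwise manner. A subrepresentation $U\subset P$ of dimension vector $\mathbf{dim}\,P-\tilde{\mathbf d}$ is the same as a tuple of subspaces $V'_i\subset P_i=\bigoplus_{\omega:j\leadsto i}U_j$ of codimension $d_i$, subject to stability under each structure map of $P$. The structure map of $P$ along an arrow $\alpha:i\to i'$ is given, in the path-basis description, by pre-composition with $\alpha$, which is exactly the map $p_\alpha$ introduced before the corollary. Hence the arrow-stability condition becomes $p_\alpha(V'_i)\subset V'_{i'}$, i.e.\ condition (a). Condition $U\subset\operatorname{Ker}(q)$ in Theorem~\ref{thm:description-of-moduli-space}(3) is a vertexwise condition $U_i\subset\operatorname{Ker}(q_i)$, which by the identification above becomes $V'_i\subset\operatorname{Ker}(q^{(i)})$, i.e.\ condition (b). Combining these identifications yields the claimed isomorphism.

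None of the steps presents a real obstacle; the main care needed is bookkeeping of the indexing conventions (codimension vs.\ dimension in the Grassmannian notations $\operatorname{Gr}^{\mathbf e}$ and $\operatorname{Gr}^{d_i}$, and the direction of paths in the decompositions of $P_i$ and $I_i$). Once these are aligned, the corollary is a direct vertexwise translation of Theorem~\ref{thm:description-of-moduli-space}.
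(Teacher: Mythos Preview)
Your proposal is correct and follows essentially the same route as the paper's own proof: identify ${\rm Hom}_{\tilde Q}(P,I)$ with $\bigoplus_{\omega}{\rm Hom}(U_i,W_j)$ via the path basis, recognise the $i$-th component of $q$ as $q^{(i)}$ using the explicit description $P_i=\bigoplus_{\omega:j\leadsto i}U_j$, and then read off parts (1)--(3) directly from Theorem~\ref{thm:description-of-moduli-space}. The only cosmetic difference is that the paper states the explicit form of $P$ (with structure maps $p_\alpha$) up front rather than deriving it inside the argument for part~(3).
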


\begin{proof}
Using the identification ${\rm Hom}_Q(P_i,V)\simeq V_i$ we see that ${\rm Hom}(P_i,I_j)$ has a basis indexed by the paths from i to j. Using the definition of $P$ and $I$, this gives an identification
$${\rm Hom}_Q(P,I)\simeq\bigoplus_{\omega:i\rightarrow j}{\rm Hom}(U_i,W_j).$$
The representation
$$(\bigoplus_{\omega:j\leadsto i}U_j)_i,(p_\alpha)_\alpha)$$
described above is precisely the representation $P$. Dually, one can make the representation $I$ explicit as
$$(\bigoplus_{\omega':i\leadsto k}U_k,(i_\alpha)_\alpha)$$
for suitable maps $i_\alpha$. Using these descriptions, we can make the above isomorphism explicit: an element
$$(q_\omega)_\omega\in\bigoplus_{\omega:i\leadsto j}{\rm Hom}(U_i,W_j)$$
corresponds to the map $q$ of quiver representations from $P$ to $I$ whose $i$-component is precisely the map $q^{(i)}$ defined above. Then the first two parts of the corollary follow from the above theorem. 

To prove the third part, we first note that condition (3)(a) of the theorem precisely means that the $V'_i$ form a subrepresentation of $P$, that is, $(V'_i)_i$ defines a point in the quiver Grassmannian ${\rm Gr}^{\bf\tilde{d}}(P)$. Using the previous identification, the condition that the subrepresentation $(V'_i)_i$ is contained in the kernel of $q$ translates to (3)(b). This proves the theorem.
    
\end{proof}

We illustrate this description in the running example. The representation $P$ is then given by
$$\begin{array}{ccccc}U_1&&&&U_1\oplus U_2\\
&\searrow&&\nearrow&\\
&&U_1\oplus U_2&&\\
&\nearrow&&\searrow&\\
U_2&&&&U_1\oplus U_2\oplus U_5\end{array}$$
and the representation $I$ is given by
$$\begin{array}{ccccc}W_4\oplus W_5&&&&W_4\\
&\searrow&&\nearrow&\\
&&W_4\oplus W_5&&\\
&\nearrow&&\searrow&\\
W_4\oplus W_5&&&&W_5,\end{array}$$
where the arrows are represented by the obvious inclusion and projection maps.

The space $\bigoplus_{\omega:i\leadsto j}{\rm Hom}(U_i,W_j)$ is
$${\rm Hom}(U_1,W_4)\oplus{\rm Hom}(U_1,W_5)\oplus{\rm Hom}(U_2,W_4)\oplus{\rm Hom}(U_2,W_5)\oplus{\rm Hom}(U_5,W_5),$$
whose elements can be depicted as 
$$\begin{array}{cccc}&U_1&U_2&U_5\\W_4&A&B&0\\ W_5&C&D&E.\end{array}
$$
The corresponding map $q:P\rightarrow I$ is given by the following components $q^{(i)}$:
$$\begin{array}{ccccc}\left[\begin{array}{cc}A\\ C\end{array}\right]&&&&\left[\begin{array}{cc}A&B\end{array}\right]\\
&&&&\\
&&\left[\begin{array}{cc}A&B\\ C&D\end{array}\right]&&\\
&&&&\\
\left[\begin{array}{c}B\\ D\end{array}\right]&&&&\left[\begin{array}{ccc}C&D&E\end{array}\right].\end{array}$$
Consequently (since ${\bf\tilde{d}}=(1,1,1,1,1)$ in the running example), the moduli space $\mathcal{M}$ is isomorphic to the space of $4\times 5$-matrics
$$\left[\begin{array}{ccc}A&B&0\\ C&D&E\end{array}\right],$$
where $A$, $B$, $C$ and $D$ are $2\times 2$-matrices and $E$ is a vector of length $2$,
such that the above submatrices have rank at most $1$, and $\mathcal{M}'$ is the open subset where they all have rank $1$.

For the moduli space $\widetilde{\mathcal{M}}$, we consider $5$-tuples of subspaces
$$V'_1\subset U_1,\; V'_2\subset U_2,\; V'_3\subset U_1\oplus U_2,\; V'_4\subset U_1\oplus U_2,\; V'_5\subset U_1\oplus U_2\oplus U_5$$
which are subject to the conditions
$$V'_1,V'_2\subset V'_3=V'_4\subset V'_5$$
and
$$\begin{array}{ccccc}V'_1\subset{\rm Ker}(\left[\begin{array}{cc}A\\ C\end{array}\right])&&&&V'_4\subset{\rm Ker}(\left[\begin{array}{cc}A&B\end{array}\right])\\
&&&&\\
&&V'_3\subset{\rm Ker}(\left[\begin{array}{cc}A&B\\ C&D\end{array}\right])&&\\
&&&&\\
V'_2\subset{\rm Ker}(\left[\begin{array}{c}B\\ D\end{array}\right])&&&&V'_5\subset{\rm Ker}(\left[\begin{array}{ccc}C&D&E\end{array}\right]).\end{array}$$
In our situation, all $V'_i$ are $1$-codimensional subspaces, thus hyperplanes, and the identification
$${\rm Gr}^1(V)\simeq\mathbb{P}(V^*)$$
allows us to make the following identifications: we identify $V'_5$ with the kernel of a form $$[\psi_1,\psi_2,\psi_5]\in(U_1\oplus U_2\oplus U_5)^*,$$ and the above inclusion conditions between the $V'_i$ give
$$V'_1={\rm Ker}(\psi_1),\; V'_2={\rm Ker}(\psi_2),\; V'_3=V'_4={\rm Ker}([\psi_1,\psi_2]).$$
The inclusions of the $V'_i$ into kernels can then be translated into the conditions that the following block matrices have rank at most one:
$$\begin{array}{ccccc}\left[\begin{array}{cc}\psi_1\\A\\ C\end{array}\right]&&&&\left[\begin{array}{cc}\psi_1&\psi_2\\ A&B\end{array}\right]\\
&&&&\\
&&\left[\begin{array}{cc}\psi_1&\psi_2\\A&B\\ C&D\end{array}\right]&&\\
&&&&\\
\left[\begin{array}{c}\psi_2\\ B\\ D\end{array}\right]&&&&\left[\begin{array}{ccc}\psi_1&\psi_2&\psi_5\\ C&D&E\end{array}\right].\end{array}$$
Consequently, the moduli space $\widetilde{M}$ is the closed subset of
$$\mathbb{P}(U_1^*)\times\mathbb{P}(U_2^*)\times\mathbb{P}(U_5^*)\times$$
$$\times {\rm Hom}(U_1,W_4)\oplus{\rm Hom}(U_1,W_5)\oplus{\rm Hom}(U_2,W_4)\oplus{\rm Hom}(U_2,W_5)\oplus{\rm Hom}(U_5,W_5)$$
of tuples
$$(\psi_1,\psi_2,\psi_5,A,B,C,D,E)$$
fulfilling the above rank conditions, which are in fact just quadratic conditions on the entries of the vectors, covectors and matrices.

\section{Categorical point of view}\label{sec:monoidal}

The  construction of Theorem \ref{thm:description-of-moduli-space} suggests to study simple objects in the following category: Let $P$ be a fixed projective of $\tilde{Q}$, and $I$ a fixed injective of $\tilde{Q}$. We define \[{\mathcal R}(P,I) = \{ (V,f,h) \; | \; V \in \rep \tilde{Q},  P\stackrel{f}{\rightarrow} V\stackrel{h}{\rightarrow}I \}\]
to be the category  whose objects are given by morphisms $q= h \circ f$ from $P$ to $I$ factoring through some  representation $V$ of $\tilde{Q}$. A morphism from $P\stackrel{f}{\rightarrow} V\stackrel{h}{\rightarrow}I$ to $P\stackrel{f'}{\rightarrow} V'\stackrel{h'}{\rightarrow}I$ is given by a morphism $g: V \to V'$ such that the following diagram commutes:

\[
\xy
(-16,0)*+{P}="0";
(3,0)*+{}="1";
(0,8)*+{V}="2";
(0,-8)*+{V'}="4";
(-3,0)*+{}="5";
(16,0)*+{I}="6";
{\ar^{f} "0";"2"};
{\ar^{h} "2";"6"};
{\ar_{f'} "0";"4"};
{\ar_{h'} "4";"6"};
{\ar^{g} "2";"4"};
{\ar@{}|{} "0";"1"};
{\ar@{}|{} "5";"6"};
\endxy
\]

It is clear from the discussion of the proof of Theorem \ref{thm:description-of-moduli-space} that the isomorphism classes of objects in the category ${\mathcal R}(P,I)$ can be identified with the set of $G_{\bf d}(\tilde{Q})$-orbits on the representation space  $R_{\bf d}(Q)$.
The category ${\mathcal R}(P,I)$ is not (pre-)additive since the space of morphisms is a non-linear subspace of  $\Hom_{\tilde{Q}}(V,V')$, but we still have the following:

\begin{lemma}
A morphism $g$ in ${\mathcal R}(P,I)$ is a monomorphism precisely when $g$ is injective, and an epimorphism in ${\mathcal R}(P,I)$ precisely when $g$ is surjective.
\end{lemma}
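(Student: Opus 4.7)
The plan is to handle the monomorphism and epimorphism statements separately, as dual cases. For each, the forward direction is easy: if $g$ is injective (respectively surjective), then since any two morphisms in ${\mathcal R}(P,I)$ that agree as maps of representations of $\tilde{Q}$ are equal as morphisms of ${\mathcal R}(P,I)$, the usual cancellation argument in $\rep \tilde{Q}$ transfers directly. The converses are where the actual content lies. The key preliminary observation is that the defining identities of a morphism $g:(V,f,h)\to(V',f',h')$ in ${\mathcal R}(P,I)$ automatically impose compatibility conditions: from $h'g=h$ one obtains $\Ker g \subseteq \Ker h$, and dually from $gf=f'$ one obtains $\Im f' \subseteq \Im g$. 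These two containments are what make the test-object constructions below work.

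Assume $g$ is not injective, and set $K=\Ker g\neq 0$. I will build the test object $(V_0,f_0,h_0)=(V\oplus K,\,(f,0),\,h\circ \pi_V)$, where $\pi_V\colon V\oplus K\to V$ is the projection, and compare the two parallel morphisms $g_1=\pi_V$ and $g_2\colon (v,k)\mapsto v+\iota(k)$, with $\iota\colon K\hookrightarrow V$ the inclusion of the kernel. Both are morphisms in ${\mathcal R}(P,I)$: the identity $g_i f_0 = f$ is immediate, and $h\circ g_2 = h_0$ uses precisely the containment $K\subseteq \Ker h$. They agree after postcomposition with $g$ because $\Im\iota\subseteq\Ker g$, and they are distinct whenever $K\neq 0$, contradicting the mono assumption.

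The epi direction is entirely dual. If $C=V'/\Im g\neq 0$, let $\pi\colon V'\twoheadrightarrow C$ be the projection; the test object will be $(V_1,f_1,h_1)=(V'\oplus C,\,(f',0),\,h'\circ\pi_{V'})$, and the two parallel morphisms $V'\to V_1$ are $v'\mapsto (v',0)$ and $v'\mapsto (v',\pi(v'))$. They agree after precomposition with $g$ because $\Im g=\Ker\pi$, and the containment $\Im f'\subseteq\Im g$ is what lets both be morphisms in ${\mathcal R}(P,I)$. The only real difficulty is spotting the two containments $\Ker g\subseteq\Ker h$ and $\Im f'\subseteq\Im g$; once they are noticed, the direct-sum test objects and parallel arrows above produce the required counterexamples mechanically.
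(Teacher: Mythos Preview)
The paper states this lemma without proof, so there is nothing to compare against at the level of argument. Your proof is correct and complete: the two containments $\Ker g\subseteq\Ker h$ (from $h'g=h$) and $\Im f'\subseteq\Im g$ (from $gf=f'$) are exactly what is needed to make the direct-sum test objects legitimate in ${\mathcal R}(P,I)$, and the parallel morphisms you write down do the job. This is the standard way one would prove such a statement, and it is presumably what the authors had in mind when they omitted the argument.
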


\begin{definition}
We say that an object $(V,f,h)$ in ${\mathcal R}(P,I)$ is {\em simple} if there is no proper monomorphism $i: (V',f',h') \to (V,f,h)$ and no proper epimorphism $p: (V,f,h) \to (V'',f'',h'')$.
\end{definition} 
For any $(V,f,h)$ in $\mathcal R$, we have the monomorphism 

\[
\xy
(-16,0)*+{P}="0";
(3,0)*+{}="1";
(0,8)*+{V}="2";
(0,-8)*+{\Im f}="4";
(-3,0)*+{}="5";
(16,0)*+{I}="6";
{\ar^{f} "0";"2"};
{\ar^{h} "2";"6"};
{\ar_{f} "0";"4"};
{\ar_{h} "4";"6"};
{\ar^{incl} "4";"2"};
{\ar@{}|{} "0";"1"};
{\ar@{}|{} "5";"6"};
\endxy
\]
and the epimorphism 
\[
\xy
(-16,0)*+{P}="0";
(3,0)*+{}="1";
(0,8)*+{V}="2";
(0,-8)*+{V/\Ker \; h}="4";
(-3,0)*+{}="5";
(16,0)*+{I}="6";
{\ar^{f} "0";"2"};
{\ar^{h} "2";"6"};
{\ar^{f'} "0";"4"};
{\ar^{h'} "4";"6"};
{\ar^{proj} "2";"4"};
{\ar@{}|{} "0";"1"};
{\ar@{}|{} "5";"6"};
\endxy
\]
The proof of Theorem \ref{thm:description-of-moduli-space} therefore shows that an object $(V,f,h)$ in ${\mathcal R}(P,I)$ is simple presicely when $f$ is surjective and $h$ is injective, which coincides with the notion of simple objects in the representation space  $R_{\bf d}(Q)$.
Moreover, the semisimple elements in the representation space  $R_{\bf d}(Q)$  correspond to objects of the form 
$(W \oplus S,f \oplus 0,h \oplus 0)$ with $S$ a semisimple representation of $\tilde{Q}$, $f$ surjective to $W$ and $h$ injective.

\begin{remark}
We do not see a natural notion of direct sum in the category ${\mathcal R}(P,I)$. Not even the object $P \to 0 \to I$ embeds into every object, since the diagram
\[
\xy
(-16,0)*+{P}="0";
(3,0)*+{}="1";
(0,8)*+{V}="2";
(0,-8)*+{0}="4";
(-3,0)*+{}="5";
(16,0)*+{I}="6";
{\ar^{f} "0";"2"};
{\ar^{h} "2";"6"};
{\ar_{0} "0";"4"};
{\ar_{0} "4";"6"};
{\ar^{incl} "4";"2"};
{\ar@{}|{} "0";"1"};
{\ar@{}|{} "5";"6"};
\endxy
\]
does not commute.
\end{remark}

However, the category ${\mathcal R}(P,I)$ does have the structure of a {\em monoidal} category \cite{Benabou, MacLane97}.


\begin{definition}\cite{MacLane97} A \emph{(strict) monoidal category} $({\bf C}, \otimes, E)$ is a category $\bf{C}$ with a bifunctor $\otimes: \bf{C}\times \bf{C} \rightarrow \bf{C}$ which is associative $$ \otimes (\otimes \times \bf{1}) = \otimes (\bf{1}\times \otimes): {\bf C}\times {\bf C}\times {\bf C}\rightarrow \bf{C}$$
and with an object $E$ which is a left and right unit for $\otimes$
$$ \otimes (E\times {\bf 1}) = id_{C} = \otimes ({\bf 1}\times E), $$
where $ E\times {\bf 1}$ is the functor ${\bf C}\rightarrow {\bf C}\times {\bf C}; c\mapsto (E, c)$.
A monoidal category $({\bf C}, \otimes, E)$ is \emph{symmetric} if for every pair of objects $A$ and $B$ in $\bf{C}$ there is an isomorphism $$S_{A,B}:A\otimes B\rightarrow B\otimes A$$ that is \emph{natural} in both $A$ and $B$
\end{definition}

Note that the associative law, in the definition above, states that the binary operation $\otimes$ is associative both for objects and for arrows. Similarly, the unit law above means that $E\otimes c = c = c\otimes E$ for objects $c$ and $1_{E}\otimes f = f = f\otimes 1_{E}$ for arrows $f$.
\bigskip

For representations $V,V'$ of $\tilde{Q}$, we denote by $V \otimes V'$ the {\em pointwise} tensor product as defined in \cite{Hers08}. It is obtained on vertices $v$ as
    \[
        (V \otimes V')_v = V_v \otimes_\bC V'_v,
    \]
and for every arrow $\alpha \in \tilde{Q}$ one obtains the tensor product of matrices
    \[
        (V \otimes V')_\alpha = V_\alpha \otimes_\bC V'_\alpha \; .
    \]
 


Moreover, for  morphisms $f: P \to V $ and $f': P \to V' $, we denote by $f \otimes f': P \to V \otimes V' $ the morphism induced by the maps 
$f_i\otimes f'_i : P_i \to V_i \otimes V'_i , p \mapsto f_i(p) \otimes f'_i(p),$ likewise for morphisms $h: V \to I.$
This yields an associative bifunctor $\otimes$ on ${\mathcal R}(P,I)$:

\[ (V,f,h) \otimes (V',f',h') := (V \otimes V',f \otimes f',h \otimes h').
\]

Consider the case where every vector space $V$ on the representation of $Q$ has dimension one. Denote by $E$ the object of ${\mathcal R}(P,I)$ which corresponds to the representation of $Q$ with all arrows being represented by the identity map.
Then $E$ is a left and right unit for $\otimes$, and we get: 
\begin{proposition} The tensor product $\otimes$ and the unit $E$ turns ${\mathcal R}(P,I)$ into a symmetric monoidal category.
\end{proposition}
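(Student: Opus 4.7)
The plan is to bootstrap the symmetric monoidal structure on ${\mathcal R}(P,I)$ from Herschend's symmetric monoidal structure $(\rep\tilde Q,\otimes,\mathbf{1})$ on $\tilde Q$-representations, by checking that the framing data $(f,h)$ transforms multiplicatively under $\otimes$. The first step is to unravel the meaning of $f\otimes f'$ and $h\otimes h'$ using the Yoneda-type identifications $\Hom_{\tilde Q}(P_j,V)\cong V_j$ and $\Hom_{\tilde Q}(V,I_j)\cong V_j^*$, which are available because $\tilde Q$ is acyclic. Under these, a morphism $f\colon P\to V$ is encoded by a tuple of vectors $(v_{j,k})_{j,k}$ in the $V_j$, a morphism $h\colon V\to I$ by a tuple of covectors $(\varphi_{j,l})_{j,l}$ in the $V_j^*$, and tensoring two such morphisms amounts componentwise to taking the tensor product of the corresponding (co)vectors. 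In the thin case every $V_j$ equals $\bC$, so this collapses to multiplying scalars. With this interpretation I would verify that $(V\otimes V',f\otimes f',h\otimes h')$ is a well-defined object of ${\mathcal R}(P,I)$; in particular that $f\otimes f'$ is a morphism $P\to V\otimes V'$ and $h\otimes h'$ a morphism $V\otimes V'\to I$.

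Next I would check the monoidal axioms in order: bifunctoriality, strict associativity, the unit law for $E$, and the symmetry. For bifunctoriality, given morphisms $g,g'$ in ${\mathcal R}(P,I)$ the map $g\otimes g'$ is a $\tilde Q$-morphism by Herschend's construction, and compatibility with the two framing triangles reduces to bilinearity of $\otimes_{\bC}$ combined with the identification above. Associativity is strict at the level of underlying $\tilde Q$-representations because $\otimes_{\bC}$ is strictly associative under the usual identification $(U\otimes V)\otimes W = U\otimes(V\otimes W)$, and the framing data is transported by the same identifications. For the unit law I would observe that $E$ is the thin object all of whose structure scalars equal $1$ (both on the arrows of $\tilde Q$ and on the components of $f_E$, $h_E$ under the identification above), so tensoring with $E$ multiplies each coordinate by $1$ and yields $E\otimes X = X = X\otimes E$ strictly. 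Symmetry is induced vertex-wise by the canonical twist $v\otimes v'\mapsto v'\otimes v$ of $\otimes_{\bC}$; this is a $\tilde Q$-isomorphism since Herschend's tensor is already symmetric monoidal, it is visibly compatible with the (co)vector data, and it inherits naturality and the hexagon identities from the symmetric monoidal category $\mathrm{Vec}_{\bC}$.

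The main obstacle I expect is the first step: the formula $p\mapsto f_j(p)\otimes f'_j(p)$ appearing in the definition is literally quadratic, not linear, in $p$, so it does not on its own define a linear map $P_j\to V_j\otimes V'_j$. What it correctly does is specify the image of the canonical generator $e_j\in (P_j)_j$, namely the element $v_{j,k}\otimes v'_{j,k}\in V_j\otimes V'_j$, and the actual $\tilde Q$-morphism $f\otimes f'$ must be read as the unique linear, $\tilde Q$-equivariant extension of that assignment. Without this reinterpretation one is led to the wrong targets, $\Hom_{\tilde Q}(P\otimes P,V\otimes V')$ for $f\otimes f'$ and $\Hom_{\tilde Q}(V\otimes V',I\otimes I)$ for $h\otimes h'$, and the proposition would not even parse. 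Once the Yoneda identification is in place, each remaining axiom reduces cleanly to the corresponding property of $(\mathrm{Vec}_{\bC},\otimes_{\bC},\bC)$, and the proposition follows.
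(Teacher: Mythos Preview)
Your approach is essentially the same as the paper's: both bootstrap the monoidal structure on ${\mathcal R}(P,I)$ from Herschend's result that $(\rep\tilde Q,\otimes,\mathbf{1})$ is symmetric monoidal with the pointwise tensor product. The paper's proof is a two-line citation of \cite{Hers08} followed by ``we obtain the same properties for the category ${\mathcal R}(P,I)$,'' whereas you actually work through bifunctoriality, associativity, the unit law, and symmetry, reducing each to the corresponding axiom in $\mathrm{Vec}_{\bC}$.

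One point worth highlighting: you correctly flag a genuine issue the paper leaves implicit. The formula $p\mapsto f_i(p)\otimes f'_i(p)$ in the paper's definition of $f\otimes f'$ is, taken literally, quadratic rather than linear in $p\in P_i$, so it does not by itself define a morphism of representations. Your resolution via the identification $\Hom_{\tilde Q}(P_j,V)\cong V_j$, reading $f\otimes f'$ as the unique $\tilde Q$-morphism $P\to V\otimes V'$ sending each canonical generator $e_{j,k}$ to $v_{j,k}\otimes v'_{j,k}$, is the correct interpretation and is exactly what is needed to make the construction coherent. This is more careful than the paper's own argument, which does not address the point.
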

\begin{proof}
We are using the constructions of $\otimes$ and $E$ as defined in the discussion above.
It is shown in \cite[Theorem 2]{Hers08}
that the category of representations of any quiver is a monoidal category with the pointwise tensor product and the identity  $E$ given by the representation with all arrows being represented by the identity map. We obtain the same properties for the category ${\mathcal R}(P,I)$.
\end{proof}

\begin{definition}
The {\em Picard group Pic}$(C, \otimes, E)$ of a symmetric monoidal category is the abelian group defined as the group of isomorphism classes of objects which are invertible with respect to the tensor product.
\end{definition}

\begin{proposition}
Assume that the dimension $d_s$ of all sinks and all sources $s$ of $Q$ is one. Then the Picard group  Pic$({\mathcal R}(P,I),\otimes, E)$ is formed by the isomorphism classes of thin representations  of $Q$ all of whose arrows being represented by non-zero scalars. We can therefore identify Pic$({\mathcal R}(P,I),\otimes, E)$ with a dense open subset of $\mathcal M$.
\end{proposition}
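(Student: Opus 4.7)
The plan is first to characterize the invertible objects of $\mathcal R(P,I)$ directly from the definition of the pointwise tensor product, then to translate this back through Lemma~\ref{l22} into the language of representations of $Q$, and finally to identify the resulting set with the dense open torus orbit in the toric variety $\M$.

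Suppose $(V,f,h)\otimes(V',f',h')\cong E$. Evaluating the pointwise tensor at a vertex $i\in\qt$ gives $V_i\otimes V_i'\cong\bC$, forcing $\dim V_i=1$, and evaluating at each arrow $\alpha$ of $\tilde Q$ gives $V_\alpha V_\alpha'=1$, forcing $V_\alpha\neq 0$. Thus the underlying $\tilde Q$-representation is thin with every arrow a non-zero scalar. Applying the same analysis to the components of $f$ and $h$, using the canonical decompositions $U_i=\bigoplus_{\alpha:s_k\to i}V_{s_k}$ and $W_i=\bigoplus_{\alpha:i\to t_l}V_{t_l}$ together with the hypothesis that sources and sinks are one-dimensional, forces every scalar component of $f_i$ and $h_i$ to be non-zero, with the inverse $(V',f',h')$ obtained from the reciprocals of all these scalars. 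By Lemma~\ref{l22} this is exactly the condition that the associated representation of $Q$ be thin with all arrows non-zero, and conversely any such representation is plainly invertible.

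For the identification with a dense open subset of $\M$, I will use that in the thin case $R_{\bf d}(Q)\cong\bC^{|Q_1|}$ and $G_{\bf d}(\tilde Q)=(\bC^*)^{|\qt|}$, so the open locus $(\bC^*)^{|Q_1|}\subset R_{\bf d}(Q)$ of representations with all arrows non-zero is $G_{\bf d}(\tilde Q)$-stable, and its categorical quotient is the algebraic torus $(\bC^*)^{|Q_1|-|\qt|}$. By the corollary at the end of Section~\ref{sec:trivialstability} this quotient is precisely the dense open torus orbit in the toric variety $\M$. Under the bijection established above, the pointwise tensor product of thin representations of $Q$ corresponds to coordinatewise multiplication of the arrow scalars, so the set bijection upgrades to an isomorphism of abelian groups.

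\emph{Expected obstacle.} The delicate point is that the formula $(f\otimes f')_i\colon p\mapsto f_i(p)\otimes f_i'(p)$ for the tensor on morphisms is not $\bC$-linear in $p$ in general, so one must justify that $f\otimes f'$ is a well-defined morphism in $\rep\tilde Q$ in the cases of interest. This is resolved by restricting to invertible (hence thin) objects, where on chosen bases of $U_i$ the map becomes componentwise multiplication of scalars; verifying that this componentwise product realizes the multiplicative law on the torus orbit of $\M$ is the technical core of the argument, after which everything else is bookkeeping through Lemma~\ref{l22}.
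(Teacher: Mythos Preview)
Your argument is correct and follows essentially the same line as the paper's proof: both deduce from $(V,f,h)\otimes(V',f',h')\cong E$ that $V_i\otimes V_i'\cong\bC$ forces thinness and that $V_\alpha V_\alpha'=1$ forces all arrow scalars to be non-zero. You are in fact more careful than the paper in two respects: you explicitly treat the $f$- and $h$-components (the paper's phrase ``for every arrow $\alpha$'' leaves this implicit), and you spell out the converse. For the embedding into $\mathcal M$, the paper simply observes that such objects are stable and hence give well-defined points in the moduli space, whereas you invoke the toric description from the corollary in Section~\ref{sec:trivialstability} and identify the Picard group with the dense torus orbit directly; this is a minor but pleasant variation, and it has the bonus of making the group-structure match (pointwise multiplication of scalars versus the torus law) explicit, which the paper does not address.
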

\begin{proof}
If  $(V,f,h)$ is invertible under tensor product, there is an element $(V',f',h')$ in $({\mathcal R}(P,I),\otimes, E)$ such that $(V,f,h) \otimes (V',f',h') \cong E.$ This in particular implies $V_v \otimes_\bC V'_v \cong \bC,$ thus $V$ must be thin.
Moreover, we have for every arrow $\alpha$ that $(V \otimes_\bC V')_\alpha =  V_\alpha V'_\alpha = 1$, thus all scalars $V_\alpha$ are invertible.
Every such element $(V,f,h)$ is certainly stable, and since we consider isomorphism classes of objects in the Picard group, we obtain an embedding of Pic$({\mathcal R}(P,I),\otimes, E)$ onto a dense open subset of $\mathcal M$.
\end{proof}

\section{Applications to Neural Networks} \label{sec:applications}

\subsection{The network function}

Let us recall some definitions from \cite{AJ20} on neural networks.

\begin{definition} \cite{AJ20} \label{neural-net}
    Let $Q$ be a connected finite acyclic quiver without multiple arrows. A \textbf{neural network} over $Q$ is a pair $(W,f)$ where $W$ is a thin representation of $Q$ and $f=(f_q)_{q \in \widetilde{Q}}$ are activation functions, i.e., almost everywhere differentiable functions $f_q: \mathbb{C} \to \mathbb{C}$.
\end{definition}

We assume there are $p=d+d'$ source vertices, from which $d$ of them are called \textbf{input vertices} and the remaining $d'$ are called \textbf{bias vertices} and $q$ sinks (also called \textbf{output vertices}) in the quiver $Q$. Elements $x \in \mathbb{C}^d$ are called \textbf{input vectors} for the neural network. We define the \textbf{activation output} at vertex $v\in \qt$ of the neural network $(W,f)$ with respect to the input $x$ as
\[
    \act(W,f)_v(x)= \left\{ \begin{array}{ll}
        x_v & \text{ if } v \text{ is an input vertex,}  \\
        1 & \text{ if } v \text{ is a bias vertex,} \\
        f_v \left( \displaystyle\sum_{\alpha \in \zeta_v} W_\alpha \act(W,f)_{s(\alpha)}(x) \right) & \text{ in any other case.}
    \end{array} \right.
\]
Here $\zeta_v$ is the set of arrows of $Q$ with target $v$. We also define the \textbf{pre-activation} at vertex $v$ of the neural network $(W,f)$ with respect to the input $x$ as
\[
    \pre\act(W,f)_v(x)= \left\{ \begin{array}{ll}
        1 & \text{ if } v \text{ is a source vertex,} \\
       \displaystyle\sum_{\alpha \in \zeta_v} W_\alpha \act(W,f)_{s(\alpha)}(x)  & \text{ in any other case.}
    \end{array} \right.
\]


Denote by $Q_t = \{ t_1,...,t_k \}$ the set of sink vertices of the quiver $Q$. The \textbf{network function} of the neural network $(W,f)$ is the map $\Psi(W,f): \mathbb{C}^d \to \mathbb{C}^q$ given by 
\[
    \Psi(W,f)(x) = \Big( \act(W,f)_{t_1}(x), ...,  \act(W,f)_{t_k}(x)\Big) \in \mathbb{C}^q.
\]


The \textbf{knowledge map} of the neural network $(W,f)$ is
\[
\begin{array}{clcl}
    \varphi(W,f) : & \mathbb{C}^d & \to & thin(Q) \\ 
     & x & \mapsto & W_x^f
\end{array}
\]
where
\[
    \left( W_x^f \right)_{\alpha} = \left\{ \begin{array}{ll}
        W_\alpha x_{s(\alpha)}  & \text{ if } s(\alpha) \text{ is an input vertex,}  \\ \\
        
        W_\alpha & \text{ if } s(\alpha) \text{ is a bias vertex,} \\ \\
        
        W_\alpha \dfrac{\act(W,f)_{s(\alpha)}(x)}{\pre\act(W,f)_{s(\alpha)}(x)} & \text{ if } s(\alpha) \text{ is a hidden vertex.} \\
    \end{array} \right.
\]
Note that the knowledge map is not well-defined when $\pre\act(W,f)_{s(\alpha)}(x)$ is zero for some $\alpha$ but this corresponds to a set of measure zero and so we ignore these cases. We also define the map
\[
    \begin{array}{clcc}
        \Hat{\Psi}: & \mathcal{M}(Q) & \to & \mathbb{C}^q  \\
         & [V] & \mapsto & \Psi(V,1)(1,...,1)
    \end{array}
\]
where $(V,1)$ denotes a neural network whose activation functions are all equal to the identity map (equivalently, activations and pre-activations of vertices are always equal) and $(1,...,1) \in \mathbb{C}^d$

\begin{theorem} \cite{AJ20} \label{thm:AJ}
Let $(W,f)$ be a neural network and $x\in \mathbb{C}^d$ an input vector. The following diagram is commutative
    \[
        \begin{tikzcd}
            \mathbb{C}^d \arrow[rrrr, "\Psi(W f)"] \arrow[drr, "\varphi(W f)"] & & & &  \mathbb{C}^q  \\
            & & \mathcal{M} \arrow[urr, "\Hat{\Psi}"] & &\\
        \end{tikzcd}
    \]
\end{theorem}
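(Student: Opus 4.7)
The goal is to prove that $\Psi(W,f)(x) = \hat{\Psi}(\varphi(W,f)(x)) = \Psi(W_x^f, 1)(1,\ldots,1)$ at every input $x$ where $\varphi(W,f)$ is defined (off the measure-zero locus where some pre-activation vanishes). Well-definedness of $\hat{\Psi}$ on $\mathcal{M}$ is guaranteed by Theorem \ref{t33}: each scalar $\Psi(V,1)(1,\ldots,1)_{t}$ is a sum of products of arrow-weights along paths from sources to the sink $t$, and such path-products are precisely the trace coordinates separating the $G_{\mathbf{d}}(\tilde{Q})$-orbits. Once well-definedness is noted, the theorem reduces to verifying the equality for a single representative $W_x^f$.

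The heart of the argument is the inductive claim, for every $v\in\tilde{Q}$,
$$\act(W_x^f,1)_v(1,\ldots,1) \;=\; \pre\act(W,f)_v(x),$$
to be proved along a topological ordering of $Q$ (available since $Q$ is finite and acyclic). The engine driving the induction is the telescoping identity
$$(W_x^f)_\alpha \cdot \act(W_x^f,1)_{s(\alpha)}(1,\ldots,1) \;=\; W_\alpha \cdot \act(W,f)_{s(\alpha)}(x),$$
valid for every arrow $\alpha$ with target $v$. The three cases in the definition of $\varphi(W,f)$ are precisely engineered to make this work: for an input source one gets $W_\alpha x_{s(\alpha)}$ on both sides; for a bias source both sides are $W_\alpha$; and for a hidden source the ratio $\act/\pre\act$ absorbed into $(W_x^f)_\alpha$ cancels against the inductive value $\pre\act(W,f)_{s(\alpha)}(x)$ to leave $W_\alpha \act(W,f)_{s(\alpha)}(x)$.

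Summing over all arrows into $v$ and applying the identity termwise shows that $\act(W_x^f,1)_v(1,\ldots,1)$ equals $\sum_\alpha W_\alpha \act(W,f)_{s(\alpha)}(x) = \pre\act(W,f)_v(x)$, closing the induction. At a sink $t$, the activation functions of Definition \ref{neural-net} are indexed only by $\tilde{Q}$ and therefore do not act at $t$, so $\act(W,f)_t(x) = \pre\act(W,f)_t(x)$, and the inductive identity at $t$ yields $\Psi(W,f)_t(x) = \Psi(W_x^f,1)_t(1,\ldots,1)$ componentwise over the sinks $t_1,\ldots,t_q$. The only real obstacle is the bookkeeping around the three cases of the telescoping identity, and confirming that the ratio $\act/\pre\act$ appearing in the knowledge map is exactly the conversion factor that makes each inductive step close; everything else is a direct unfolding of definitions.
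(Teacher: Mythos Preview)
The paper does not supply its own proof of this statement; it is quoted with a citation to \cite{AJ20} and no argument is given in the text. Your proof is correct and is essentially the natural one: the inductive identity
\[
\act(W_x^f,1)_v(1,\ldots,1) \;=\; \pre\act(W,f)_v(x)
\]
along a topological order of $Q$, driven by the three-case verification of
\[
(W_x^f)_\alpha \cdot \act(W_x^f,1)_{s(\alpha)}(1,\ldots,1) \;=\; W_\alpha \cdot \act(W,f)_{s(\alpha)}(x),
\]
is exactly the mechanism that makes the knowledge map work, and your handling of the sink vertices (no activation function there, since $f$ is indexed by $\widetilde{Q}_0$) is the correct reading of Definition~\ref{neural-net}. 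Your justification of the well-definedness of $\hat{\Psi}$ on $\mathcal{M}$ via the path-product/trace description in Theorem~\ref{t33} is also sound; the paper records essentially the same observation immediately after this theorem, in the lemma proving $N_V = {\rm out}\circ\pi(V)\circ{\rm in}$.
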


This factorization of the network function can easily be extended in the following way. Let $Map(X,Y)$ be the category whose objects are maps $f:X \to Y$ and morphisms are pairs of maps $(\alpha_1,\alpha_2):f \to g$ making the obvious diagram commutative. Denote by $NN(Q)$ the grupoid of neural networks.

\begin{theorem} \label{thm:func-fact}
    The factorization of the network function through the knowledge map is functorial in the neural network, that is, the following diagram is commutative
    \[
        \begin{tikzcd}
            NN(Q) \arrow[rr, "\Psi"] \arrow[dr, "\varphi"] & & Map(\mathbb{C}^d,\ \mathbb{C}^q).  \\
            & Map(\mathbb{C}^d, \ \mathcal{M}) \arrow[ur, "\Hat{\Psi}^*"] & \\
        \end{tikzcd}
    \]
\end{theorem}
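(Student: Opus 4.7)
The plan is to upgrade the pointwise commutativity of Theorem \ref{thm:AJ} to a commuting triangle of functors. First I would pin down the morphism structures of the three categories involved. In the groupoid $NN(Q)$, a morphism $(W,f) \to (W',f')$ is naturally modelled by an element $g=(g_i)_{i\in \qt}$ of $G_{\bf d}(\tQ)$ whose action carries $W$ to $W'$ compatibly with the activation functions; in particular it must act trivially at input, bias and sink vertices, and commute with each $f_v$ at hidden vertices (so that $f'_v = f_v$). In $Map(X,Y)$, a morphism is a pair $(\alpha_1,\alpha_2)$ filling in the obvious square, and the functor $\widehat{\Psi}^\ast$ is post-composition with $\widehat{\Psi}$, which is automatic.

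Second, I would verify that $\Psi$ and $\varphi$ are themselves functors out of $NN(Q)$. On objects the assignments are already given, so the work is at morphisms. The claim I would prove is that any morphism $\sigma:(W,f)\to(W',f')$ in $NN(Q)$ is sent to an identity morphism in the respective $Map$ category. For $\Psi$, I would argue by induction along a topological sort of $\qt$ that $\act(W,f)_v(x)=\act(W',f')_v(x)$ for every input $x$ and every vertex $v$: the base case is immediate at input and bias vertices because $g$ acts trivially there, and at a hidden vertex $v$ the gauge factor $g_v$ cancels across the weighted sum inside $f_v$ against the $g_{s(\alpha)}^{-1}$ absorbed by the inductive hypothesis. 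Since sinks are fixed by $g$, we get $\Psi(W,f)=\Psi(W',f')$ as maps $\bC^d\to\bC^q$. For $\varphi$, a direct inspection of the formula for $W_x^f$ shows that $W_x^f$ and $(W')_x^{f'}$ differ by the same gauge element $g$, hence lie in a common $G_{\bf d}(\tQ)$-orbit, and therefore descend to the same point of $\M$ for every $x$. Thus $\varphi(\sigma)$ can be taken as the identity in $Map(\bC^d,\M)$.

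Finally, commutativity of the triangle at the object level is exactly Theorem \ref{thm:AJ}, which gives $\widehat{\Psi}\circ\varphi(W,f)=\Psi(W,f)$. Commutativity at the morphism level then follows formally, since both $\Psi(\sigma)$ and $\widehat{\Psi}^\ast(\varphi(\sigma))$ are the identity morphism of the common network function. The only genuine obstacle is the set-up step: one has to fix the morphism class in $NN(Q)$ carefully so that gauge transformations interact coherently with the (possibly highly nonlinear) activation functions $f_v$. Once this definition is pinned down, functoriality of $\Psi$ and $\varphi$ is essentially the gauge-invariance statement above, and the commutative triangle is an immediate corollary of Theorem \ref{thm:AJ} applied orbit-by-orbit.
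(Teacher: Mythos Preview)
Your approach is essentially the paper's: use the $G$-invariance of the network function (which the paper simply cites from \cite{AJ20}) to make $\Psi$ a functor, check that $\varphi$ respects morphisms, and invoke Theorem~\ref{thm:AJ} for commutativity on objects. The paper's proof is a three-sentence sketch of exactly this; your version is just more explicit.

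One slip in your inductive argument: the claim $\act(W,f)_v(x)=\act(W',f')_v(x)$ for \emph{every} vertex $v$ is not the right invariant. At a hidden vertex $v$, writing $W'_\alpha=g_{t(\alpha)}W_\alpha g_{s(\alpha)}^{-1}$, the correct inductive statement is $\act(W',f')_v(x)=g_v\cdot\act(W,f)_v(x)$ (and likewise for pre-activations), which requires the compatibility $f_v(g_v z)=g_v f_v(z)$ that your morphism convention must enforce. The $g_{s(\alpha)}^{-1}$ in $W'_\alpha$ cancels against the factor $g_{s(\alpha)}$ coming from the \emph{corrected} inductive hypothesis, leaving an overall $g_v$ outside the sum; it does not cancel to give literal equality. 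Since $g_v=1$ at sinks, the conclusion $\Psi(W,f)=\Psi(W',f')$ still follows, and the same corrected transformation law is exactly what makes $(W')_x^{f'}=g\cdot W_x^f$ hold in your $\varphi$ step. With this adjustment your argument is complete and matches the paper's. (The paper writes $\varphi(\tau)=(1,\tau)$ rather than the identity; after passing to the orbit space $\mathcal{M}$ this is the same morphism you describe.)
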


\begin{proof}
    It follows from the invariance of the network function under the action of $G$ defined in \cite{AJ20} that $\Psi$ is a well-defined functor. For a morphism of neural networks $\tau:(W,f) \to (V,g)$ define $(1,\tau):\varphi(W,f) \to \varphi(V,g)$. This makes $\varphi$ into a functor. Commutativity of the diagram is proved analogously as that of the previous theorem.
\end{proof}

Therefore, the evaluation of the network function of a neural network $(W,f)$ on a fixed input $x$ is equivalent to evaluating the network function of the neural network $(W_x^f,1)$ on the input $(1,...,1)$. So we will study here the network function of neural networks of the form $(V,1)$. We want to describe this network function on the level of moduli spaces first for any dimension vector not necessarily thin.


The network function for a representation $V$ of dimension vector ${\bf d}$ of $Q$ is thus defined inductively by propagating values as follows. We are given a tuple $(x_1,\ldots,x_p)\in\bigoplus_{k=1}^pV_{s_k}$. We define $x_i$ for all $i\in Q_0$ inductively by
\begin{itemize}
\item $x_{s_k}=x_k$ for $k=1,\ldots,p$,
\item in case $i$ is not a source in $Q$, and all $x_j$ for $j$ a vertex admitting an arrow to $i$ already defined, we define $$x_i=\sum_{\alpha:j\rightarrow i}V_\alpha(x_j).$$
\end{itemize}

We will denote $N_V := \Psi(V,1)$ for ease of notation. The network function $N_V$ is therefore defined as
$$N_V:\bigoplus_{k=1}^pV_{s_k}\rightarrow\bigoplus_{l=1}^qV_{t_l},$$
$$N_V(x_1,\ldots,x_p)=(x_{t_1},\ldots,x_{t_q}).$$


We will now show that $N_V$ only depends on the point in the moduli space $\mathcal{M}$ associated to $V$ and identify it using the explicit description of $\mathcal{M}$.

Recall that
$$U_i=\bigoplus_{\alpha:s_k\rightarrow i}V_{s_k},\; W_i=\bigoplus_{\alpha:i\rightarrow t_l}V_{t_l}.$$

We thus have natural maps
$${\rm in}:\bigoplus_{k=1}^pV_{s_k}\rightarrow\bigoplus_{i\in \qt}U_i,$$
$${\rm out}:\bigoplus_{i\in \qt}W_i\rightarrow \bigoplus_{l=1}^qV_{t_q}$$
collecting the natural injection (resp.~projection) maps. Also recall that the moduli space $\mathcal{M}$ is given as the image of the map
$$\pi:R_{\bf d}(Q)\rightarrow\bigoplus_{\omega:i\leadsto j}{\rm Hom}(U_i,W_j),$$
$$\pi(V)=(h_jV_\omega f_i)_{\omega: i\leadsto j}.$$

\begin{lemma} We have
$$N_V={\rm out}\circ\pi(V)\circ{\rm in},$$
thus, in particular, $N_V$ only depends on the point $\pi(V)$ associated to $V$ in the moduli space $\mathcal{M}$, and $N_V$ depends only linearly on $\pi(V)$.
\end{lemma}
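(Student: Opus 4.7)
The plan is to unfold the recursive definition of $N_V$ and identify each summand as a path $\omega$ in $\tilde{Q}$ bracketed by a framing $f_i$ on the left and a coframing $h_j$ on the right, i.e., by exactly a component of $\pi(V)$. Since $Q$ is acyclic, the induction terminates and the path bookkeeping is straightforward.

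First I would fix $(x_1,\ldots,x_p)\in\bigoplus_k V_{s_k}$ and denote by $u_i\in U_i$ the $i$-th component of $\mathrm{in}(x_1,\ldots,x_p)$. By the definitions of $\mathrm{in}$ and $U_i=\bigoplus_{\alpha:s_k\rightarrow i}V_{s_k}$, the vector $f_i(u_i)=\sum_{\alpha:s_k\rightarrow i}V_\alpha(x_k)$ is precisely the total contribution from sources to the value propagated at $i\in\tilde{Q}$. The key step is then to prove, by induction along a topological order of $\tilde{Q}_0$, the formula
$$x_j=\sum_{i\in\tilde{Q}}\sum_{\omega:i\leadsto j}(V_\omega\circ f_i)(u_i),$$
where the inner sum ranges over paths in $\tilde{Q}$ from $i$ to $j$ and includes the trivial path when $i=j$. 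The base case at a minimal vertex of $\tilde{Q}$ reduces to $x_j=f_j(u_j)$ by the previous observation. For the inductive step one splits the recursion $x_j=\sum_{\alpha:s_k\rightarrow j}V_\alpha(x_k)+\sum_{\alpha:i\rightarrow j,\,i\in\tilde{Q}}V_\alpha(x_i)$ into its source and non-source parts, substitutes the induction hypothesis into each $x_i$, and reindexes $\alpha\circ\omega':i'\leadsto i\rightarrow j$ as a path $\omega:i'\leadsto j$; the trivial-path contributions absorb the source term.

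Finally, at a sink $t_l$ the recursion reads $x_{t_l}=\sum_{\alpha:j\rightarrow t_l,\,j\in\tilde{Q}}V_\alpha(x_j)$, and by the definitions of $h_j:V_j\rightarrow W_j$ and $\mathrm{out}$ this is exactly the $l$-th coordinate of $\mathrm{out}\bigl((h_j(x_j))_{j\in\tilde{Q}}\bigr)$. Substituting the displayed path formula for $x_j$ and using $\pi(V)_\omega=h_j\circ V_\omega\circ f_i$ gives
$$x_{t_l}=\mathrm{out}\Bigl(\sum_{i\in\tilde{Q}}\sum_{\omega:i\leadsto j}(h_j\circ V_\omega\circ f_i)(u_i)\Bigr)_l=\bigl(\mathrm{out}\circ\pi(V)\circ\mathrm{in}\bigr)_l(x_1,\ldots,x_p),$$
which is the claimed identity. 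The two concluding assertions follow at once: the right-hand side depends only on $\pi(V)$, and it is visibly linear in the tuple $\pi(V)$. The only point that requires care is the reindexing in the inductive step, ensuring that every path in $\tilde{Q}$ arises exactly once as the concatenation of an arrow with a shorter path; once this bookkeeping is clean, there is no further obstacle.
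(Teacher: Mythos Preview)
Your proposal is correct and follows essentially the same route as the paper: unfold the recursion for $N_V$ into a sum over all paths from sources to sinks, then recognise that this sum is exactly $\mathrm{out}\circ\pi(V)\circ\mathrm{in}$. The paper compresses the argument by simply asserting the closed formula $x_{t_l}=\sum_{k}\sum_{\omega:s_k\leadsto t_l}V_\omega(x_{s_k})$ and then reading off the factorisation, whereas you make the induction along a topological order of $\tilde{Q}_0$ explicit and keep the framings $f_i$ and coframings $h_j$ separated throughout; this is a presentational refinement rather than a different idea.
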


\begin{proof}
 From the recursive definition of $N_V$, we deduce that
$$x_{t_l}=\sum_{k=1}^p\sum_{\omega:s_k\leadsto t_l}V_\omega(x_{s_k}).$$
Now the function ${\rm in}$ first distributes the $x_s$ to all the spaces $U_i$, and $f_i$ collects all the maps from $V_\alpha$ from sources to $\tilde{Q}$. Conversely, $h_i$ collects all the maps from $\tilde{Q}$ to sinks, and ${\rm out}$ distributes from all the spaces $W_j$ to the $V_{t_l}$. This implies that $N_V$ is given as the composition of ${\rm in}$, the matrix $(h_jV_\omega f_i)_{\omega:i\leadsto j}$, and the map ${\rm out}$. This proves the lemma.
\end{proof}

Again we illustrate this in the running example. The representation

$$\begin{array}{ccccccccc} U_1&\stackrel{\varphi}{\rightarrow}&\mathbb{C}&&&&\mathbb{C}&\stackrel{v}{\rightarrow}&W_4\\
&&&a\searrow&&\nearrow c&&&\\
&&&&\mathbb{C}&&&&\\
&&&b\nearrow&&\searrow d&&&\\
U_2&\stackrel{\psi}{\rightarrow}&\mathbb{C}&&&&\mathbb{C}&\stackrel{w}{\rightarrow}&W5\\
&&&&&&\uparrow\lambda&&\\
&&&&&&U_5&&\end{array}$$
corresponds to the following representation of the original quiver:

$$\begin{array}{ccccccccc} \bullet &\stackrel{}{\rightarrow}&\bullet&&&&\bullet&\stackrel{}{\rightarrow}&\bullet\\
&&&\searrow&&\nearrow &&&\\
&&&&\bullet&&&&\\
&&&\nearrow&&\searrow &&&\\
\bullet&\stackrel{}{\rightarrow}&\bullet&&&&\bullet&\stackrel{}{\rightarrow}&\bullet\\
&&&&&&\uparrow&&\\
&&&&&&\bullet&&\end{array}$$


and the network function works as follows. We are given scalars $x_1,x_2,x_3$ and attach the following values to the vertices inductively:
\begin{itemize}
\item $x_{s_1}=x_1$, $x_{s_2}=x_2$, $x_{s_3}=x_3$,
\item $x_1=\varphi_1x_1+\varphi_2x_2$,
\item $x_2=\psi_1x_1+\psi_2x_2$,
\item $x_3=a\varphi_1x_1+a\varphi_2x_2+b\psi_1x_1+b\psi_2x_2$,
\item $x_4=ac\varphi_1x_1+ac\varphi_2x_2+bc\psi_1x_1+bc\psi_2x_2$,
\item $x_5=ad\varphi_1x_1+ad\varphi_2x_2+bd\psi_1x_1+bd\psi_2x_2+\lambda x_3$,
\item $x_{t_1}=acv_1\varphi_1x_1+acv_1\varphi_2x_2+bcv_1\psi_1x_1+bcv_1\psi_2x_2+
adw_1\varphi_1x_1+adw_1\varphi_2x_2+bdw_1\psi_1x_1+bdw_1\psi_2x_2+\lambda w_1 x_3$. 
Thus,

$$x_{t_1}=(acv_1\varphi_1+bcv_1\psi_1+adw_1\varphi_1+bdw_1\psi_1)x_1+(acv_1\varphi_2+bcv_1\psi_2+adw_1\varphi_2+bdw_1\psi_2)x_2+\lambda w_1x_3,$$
\item $x_{t_2}=acv_2\varphi_1x_1+acv_2\varphi_2x_2+bcv_2\psi_1x_1+bcv_2\psi_2x_2+
adw_2\varphi_1x_1+adw_2\varphi_2x_2+bdw_2\psi_1x_1+bdw_2\psi_2x_2+\lambda w_2 x_3$,

and then:

$$x_{t_2}=acv_2\varphi_1+bcv_2\psi_1+adw_2\varphi_1+bdw_2\psi_1)x_1+(acv_2\varphi_2+bcv_2\psi_2+adw_2\varphi_2+bdw_2\psi_2)x_2+\lambda w_2x_3.$$
\end{itemize}
The maps ${\rm in}, {\rm out}$ are given by
$${\rm in}(x_1,x_2,x_3)=(\left[x_1\atop x_2\right],\left[x_1\atop x_2\right],x_3),$$
$${\rm out}(\left[y_1\atop y_2\right],\left[z_1\atop z_2\right])=(y_1+z_1,y_2+z_2).$$
We calculate ${\rm out}\circ\pi(V)\circ{\rm in}$, 
$${\rm out}\circ\pi(V)\circ{\rm in}=\left[\begin{array}{cc}e_1^*&e_1^*\\ e_2^*&e_2^*\end{array}\right]\circ\left[\begin{array}{ccc}acv\varphi&bcv\psi&0\\ adw\varphi&bdw\psi&\lambda w\end{array}\right]\circ\left[\begin{array}{ccc}e_1&e_2&0\\ e_1&e_2&0\\ 0&0&1\end{array}\right]=$$
$$=\left[\begin{array}{ccc}acv_1\varphi_1+bcv_1\psi_1+adw_1\varphi_1+bdw_1\psi_1&acv_1\varphi_2+
bcv_1\psi_2+adw_1\varphi_2+bdw_1\psi_2&\lambda w_1\\
acv_2\varphi_1+bcv_2\psi_1+adw_2\varphi_1+bdw_2\psi_1&acv_2\varphi_2+
bcv_2\psi_2+adw_2\varphi_2+bdw_2\psi_2&\lambda w_2
\end{array}\right],$$
and that is indeed the matrix representing the map $(x_1,x_2,x_3)\mapsto(x_{t_1},x_{t_2}).$

\subsection{The network category}
Let us focus know in the case of thin representations. Here, we will introduce a category that contains the image of the knowledge map $\varphi(W,f)$ of a neural network $(W,f)$, that we call the {\em network category } $\mathcal{N}_Q$ of the underlying quiver $Q$.
\begin{definition}Let $\mathcal{N}_Q$ be the (not necessarily full) subcategory of $\rep Q$ formed by all representations $W$ of $Q$ with $W_v = \bC$ for all vertices $v$ of $Q,$ and whose morphism set from $W$ to $W'$ is defined to be all morphisms $g$ in $\rep Q$ from $W$ to $W'$, given by a family of complex numbers $(g_v)_{v \in Q_0}$, such that $g_v = 1 $ whenever $v$ is a sink or a source.
\end{definition}
This condition we use is closed under composition, so we do get an actual category. Denote by ${\bf d}$ the thin dimension vector $d_i = 1$ for all vertices $i$ of $Q$. Then we have the following result:
\begin{lemma}
The isomorphism classes of objects in the category $\mathcal{N}_Q$  correspond bijectively to the $G_{\bf d}(\tilde{Q})$-orbits in $R_{\bf d}(Q)$.
\end{lemma}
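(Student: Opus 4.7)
The proof is essentially an unpacking of definitions once the object set and the morphism set of $\mathcal{N}_Q$ are matched with $R_{\bf d}(Q)$ and $G_{\bf d}(\tilde Q)$ respectively. First I would observe that for the thin dimension vector ${\bf d} = (1,\dots,1)$, the representation space $R_{\bf d}(Q) = \bigoplus_{\alpha : i \to j}\Hom(\mathbb{C},\mathbb{C})$ is naturally in bijection with $\mathbb{C}^{Q_1}$, and the same tuples of scalars $(W_\alpha)_{\alpha \in Q_1}$ parametrise precisely the objects of $\mathcal{N}_Q$. So the object-level correspondence is trivially a bijection.

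Next I would analyse morphisms. A morphism $g : W \to W'$ in $\mathcal{N}_Q$ is by definition a family of complex numbers $(g_v)_{v \in Q_0}$ with $g_v = 1$ whenever $v$ is a source or a sink, satisfying the commuting square $g_j W_\alpha = W'_\alpha g_i$ for every arrow $\alpha : i \to j$. Such a $g$ is an isomorphism in $\mathcal{N}_Q$ precisely when each $g_v$ is invertible in $\mathbb{C}$, i.e.\ $g_v \in \mathbb{C}^*$; note that this forces no extra conditions at the sources and sinks since there the value is already $1 \in \mathbb{C}^*$. Hence the set of isomorphisms in $\mathcal{N}_Q$ with source $W$ and arbitrary target is exactly the set of tuples $(g_v)_v$ in the subgroup
\[
G_{\bf d}(\tilde Q) = \{(g_v)_v \in (\mathbb{C}^*)^{Q_0} \; | \; g_v = 1 \text{ for } v \notin \qt\},
\]
acting on the datum $(W_\alpha)_\alpha$ via $(g_v)_v \cdot (W_\alpha)_{\alpha : i \to j} = (g_j W_\alpha g_i^{-1})_{\alpha : i \to j}$, which is precisely the restriction to $G_{\bf d}(\tilde Q)$ of the base change action on $R_{\bf d}(Q)$ recalled in the introduction.

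Putting these two observations together, two objects $W, W'$ of $\mathcal{N}_Q$ are isomorphic if and only if there exists $(g_v)_v \in G_{\bf d}(\tilde Q)$ with $(g_j W_\alpha g_i^{-1})_{\alpha} = (W'_\alpha)_\alpha$, i.e.\ iff they lie in the same $G_{\bf d}(\tilde Q)$-orbit of $R_{\bf d}(Q)$. This yields the desired bijection between isomorphism classes of objects of $\mathcal{N}_Q$ and $G_{\bf d}(\tilde Q)$-orbits in $R_{\bf d}(Q)$.

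There is no real obstacle here — the proof is purely formal — but the one subtlety worth flagging explicitly is that imposing $g_v = 1$ at sources and sinks in the definition of $\mathcal{N}_Q$ is exactly what matches the constraint cutting out $G_{\bf d}(\tilde Q)$ inside $G_{\bf d}(Q)$, so that the quotient by general base change (which would only see isomorphism classes of thin representations up to all rescalings) is correctly refined to the moduli problem considered in the paper.
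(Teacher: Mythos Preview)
Your proof is correct and is exactly the intended argument: the paper states this lemma without proof, treating it as immediate from the definitions, and your unpacking of objects as points of $R_{\bf d}(Q)$ and isomorphisms as elements of $G_{\bf d}(\tilde Q)$ is precisely what makes it so. The only thing one might add is that the paper implicitly relies on this identification when it subsequently matches $\mathcal{N}_Q$ with the thin part of $\mathcal{R}(P,I)$, but your direct verification is cleaner and self-contained.
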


The results of the previous sections allow to identify $\mathcal{N}_Q$ with the full subcategory of
${\mathcal R}(P,I)$ given by all triples $(V,f,h)$ where $V$ is a thin representation of $\tilde{Q}$, and $P$ and $I$ correspond to the choice of one-dimensional spaces for all sinks and sources of $Q$. Recall from \cite{MacLane97} the category theoretic definition of a groupoid:

\begin{definition}\cite{MacLane97}
    A \emph{groupoid} is a small category in which every morphism is an isomorphism.
\end{definition}
\begin{proposition} The full subcategory $\mathcal{N}^S_Q$ of $\mathcal{N}_Q$ formed by simple objects is a groupoid.
  \end{proposition}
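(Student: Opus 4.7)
The plan is to show directly that every morphism $g: W \to W'$ in $\mathcal{N}^S_Q$ is invertible. Such a $g$ is prescribed by scalars $(g_v)_{v \in Q_0}$ with $g_v = 1$ for every source and sink of $Q$, satisfying the intertwining relation $g_j W_\alpha = W'_\alpha g_i$ for every arrow $\alpha:i \to j$. Since every vertex space is $\mathbb{C}$, $g$ is an isomorphism if and only if $g_v \neq 0$ for every $v \in Q_0$; only the values at $v \in \tilde{Q}_0$ are at issue, since sources and sinks already carry $g_v = 1$.

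Let $(V,f,h)$ and $(V',f',h')$ be the triples in $\mathcal{R}(P,I)$ attached to $W$ and $W'$, and set $S = \{i \in \tilde{Q}_0 : g_i = 0\}$. Unpacking the intertwining relations across arrows incident to a source or a sink yields $g_i f_i = f'_i$ and $h_i = h'_i g_i$ for each $i \in \tilde{Q}_0$; in particular $\mathrm{Im}(f'_i) = 0$ whenever $i \in S$. The intertwining relation $g_j V_\alpha = V'_\alpha g_i$, applied to an arrow $\alpha:i \to j$ of $\tilde{Q}$ with $i \notin S$ and $j \in S$, gives $V'_\alpha g_i = 0$, and since $g_i \neq 0$ this forces $V'_\alpha = 0$.

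Assume toward a contradiction that $S \neq \emptyset$, and define a subspace $V'' \subset V'$ by $V''_i = V'_i = \mathbb{C}$ for $i \notin S$ and $V''_i = 0$ for $i \in S$. The vanishing of $V'_\alpha$ along every arrow from $\tilde{Q}_0 \setminus S$ into $S$ is exactly what is required to make $V''$ a $\tilde{Q}$-subrepresentation of $V'$. Moreover $\mathrm{Im}(f'_i) \subset V''_i$ for all $i$: trivially off $S$, and because $f'_i = 0$ on $S$. By simplicity of $(V',f',h')$ and the second bullet of Lemma \ref{lemmasimple}, the smallest $\tilde{Q}$-subrepresentation of $V'$ containing all the images $\mathrm{Im}(f'_i)$ is $V'$ itself, so $V'' \supseteq V'$; this contradicts $V''_i = 0$ for $i \in S$. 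Therefore $S = \emptyset$, so every $g_v$ is nonzero and $g$ is an isomorphism.

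The main obstacle is identifying the correct subrepresentation to feed into Lemma \ref{lemmasimple}; once $V''$ is written down, the rest is bookkeeping. A completely symmetric argument using the first bullet of Lemma \ref{lemmasimple} together with the relation $h_i = h'_i g_i$ (which propagates vanishing of $h_i$ on $S$ into a contradiction with the injectivity of $h'$) works equally well and can serve as a sanity check.
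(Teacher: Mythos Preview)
Your argument is correct and takes a genuinely different route from the paper. The paper invokes Schur's Lemma after passing to representations of the deframed quiver $Q'$: simple objects of $\mathcal{N}_Q$ correspond to simple $Q'$-representations, morphisms between simples are zero or invertible, and the zero map is excluded because $g_v=1$ at sources and sinks. Your proof stays entirely inside $\mathcal{N}_Q$ and $\mathcal{R}(P,I)$, exhibiting by hand the proper subrepresentation $V''\subset V'$ that would witness non-simplicity if some $g_i$ vanished, and then appealing directly to Lemma~\ref{lemmasimple}. This is more elementary in that it avoids the deframing apparatus and Schur's Lemma altogether; the paper's proof is shorter and situates the result conceptually. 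Both are perfectly valid.

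One small slip in your closing parenthetical: the symmetric argument via the first bullet of Lemma~\ref{lemmasimple} contradicts the injectivity of $h$ (equivalently, the simplicity of the \emph{domain} $(V,f,h)$), not of $h'$. From $h_i=h'_i g_i$ and $g_i=0$ for $i\in S$ you get $h_i=0$ on $S$, and the subrepresentation supported on $S$ then lies in $\mathrm{Ker}(h)$; nothing is deduced about $h'$. This does not affect your main argument.
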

\begin{proof}
 $\mathcal{N}^S_Q$ is certainly a small category. We need to show that every morphism is an isomorphism.
 This follows from Schur's Lemma applied to objects in rep $Q'$. The simples in $\mathcal{N}_Q$ correspond to simple representations of $Q'$, as explained in Section \ref{section:deframing}.
 The Schur Lemma states that for simple representations $U$ and $V$of the quiver $Q'$, every morphism $\phi: U \to V$ is either zero or invertible. But the zero morphism is not a morphism in the subcategory $\mathcal{N}_Q$, therefore every morphism in  $\mathcal{N}^S_Q$ must be an isomorphism.
\end{proof}

Note that the category $\mathcal{N}_Q$ itself is {\em not} a groupoid in general. 
For example, consider the thin representations
\[ U : \quad \bC \stackrel{1}{\to} \bC \stackrel{0}{\to} \bC 
\]
and \[ V : \quad \bC \stackrel{0}{\to} \bC \stackrel{1}{\to} \bC 
\]
where $Q$ is  the linearly oriented quiver of type $A_3$.
Then there is a morphism $\phi$ in $\mathcal{N}_Q$ from $U$ to $V$ as follows:
$$\begin{array}{ccccccc}
U & : & \bC & \stackrel{1}{\to} & \bC & \stackrel{0}{\to}& \bC \\
\downarrow^\phi & & { ^1}\downarrow \; & & ^0\downarrow \; & & \downarrow^1\\
V & : & \bC & \stackrel{0}{\to} & \bC & \stackrel{1}{\to} & \bC 
\end{array}$$
Of course $\phi$ is not invertible, but also $U$ and $V$ are not simple (viewed as objects in ${\mathcal R}(P,I)$). Proposition 2 of \cite{Hers08} implies the following:


\begin{corollary}\label{tensor product}

The tensor product of thin representations is a thin representation.
    
\end{corollary}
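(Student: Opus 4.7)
The plan is to unpack the definition of the pointwise tensor product given just before the statement and observe that it preserves the thin property at every vertex. First I would fix two thin representations $V$ and $V'$ of $Q$, so by definition $V_v = \mathbb{C}$ and $V'_v = \mathbb{C}$ for every vertex $v \in Q_0$.

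Next I would apply the formula $(V \otimes V')_v = V_v \otimes_{\mathbb{C}} V'_v$ from the discussion of the pointwise tensor product. Since $\mathbb{C} \otimes_{\mathbb{C}} \mathbb{C} \cong \mathbb{C}$ as $\mathbb{C}$-vector spaces, this gives a one-dimensional space at every vertex, which is exactly the condition to be a thin representation. Dimensionally speaking, $\dim(V_v \otimes V'_v) = \dim(V_v)\cdot\dim(V'_v) = 1 \cdot 1 = 1$.

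To be sure that the output is genuinely a representation of $Q$ (and not merely a dimension-vector computation), I would cite the referenced Proposition 2 of \cite{Hers08}, which ensures that the pointwise tensor product with arrow maps $(V \otimes V')_\alpha = V_\alpha \otimes_{\mathbb{C}} V'_\alpha$ assembles into an honest object of $\operatorname{rep} Q$. In the thin case these arrow maps are simply the product of the two scalars $V_\alpha$ and $V'_\alpha$, so no further verification is required.

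There is essentially no obstacle here; the content of the corollary is the numerical fact that $\dim(A \otimes B) = \dim A \cdot \dim B$ together with the fact (established in \cite{Hers08}) that the construction is functorial and defines a representation. The only subtlety worth mentioning explicitly is that one must use the pointwise tensor product of \cite{Hers08} rather than the usual tensor product of representations of an algebra, since the latter would not in general stay thin.
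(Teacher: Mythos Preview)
Your argument is correct and essentially coincides with the paper's own justification: the paper simply states that the corollary follows from Proposition~2 of \cite{Hers08}, and you have merely spelled out the trivial dimension count $\dim(\mathbb{C}\otimes_{\mathbb{C}}\mathbb{C})=1$ that makes this implication explicit.
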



\begin{corollary}
    Tensor product of stable thin representations is a stable thin representation, and tensor product of thin representations is non-stable if at least one of the two representations is non-stable.
\end{corollary}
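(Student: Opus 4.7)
The plan is to deduce both halves of the corollary from a single underlying fact: under the pointwise tensor product, the scalars attached to arrows multiply. Combined with the characterization from the preceding proposition, namely that a thin representation of $Q$ represents an element of the Picard group (and is in particular stable) precisely when every arrow scalar is non-zero, the statement will reduce to the elementary observation that a product of complex numbers vanishes if and only if one of the factors does.

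First I would record the multiplicativity in a clean form. By Corollary \ref{tensor product}, the pointwise tensor $V \otimes V'$ of two thin representations of $Q$ is again thin, so it is described by a single scalar on each arrow. Unpacking the construction $(V \otimes V')_v = V_v \otimes_\bC V'_v$ and $(V \otimes V')_\alpha = V_\alpha \otimes_\bC V'_\alpha$ under the canonical identification $\bC \otimes_\bC \bC \cong \bC$ sending $1 \otimes 1 \mapsto 1$, this scalar is precisely the product $V_\alpha \cdot V'_\alpha$. The analogous rule extends to the source and sink data via $f \otimes f'$ and $h \otimes h'$: using that these are defined on the generators $e_k$ of the projective $P$ (respectively, dually on $I$) by the diagonal formula, the scalars of $V \otimes V'$ on arrows incident to sources or sinks of $Q$ are again products of the corresponding scalars of $V$ and $V'$.

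With multiplicativity in hand, both claims follow at once. For the first, the hypothesis that $V$ and $V'$ are stable gives $V_\alpha \neq 0$ and $V'_\alpha \neq 0$ for every arrow $\alpha$ of $Q$, so $(V \otimes V')_\alpha = V_\alpha V'_\alpha \neq 0$, and hence $V \otimes V'$ is stable. For the second, I may assume $V$ is non-stable, pick an arrow $\alpha$ of $Q$ with $V_\alpha = 0$, and note that $(V \otimes V')_\alpha = 0$ for any thin $V'$, whence $V \otimes V'$ has a vanishing scalar and is non-stable; the roles of $V$ and $V'$ are symmetric. The main obstacle is none of these direct steps but rather the careful verification of the multiplicative rule on scalars, especially for the framing data $f \otimes f'$ and $h \otimes h'$, whose defining formula $p \mapsto f(p) \otimes f'(p)$ is only linear when interpreted on the generators of $P$; once that verification is pinned down, the two conclusions are one-line arithmetic observations about multiplication in $\bC$.
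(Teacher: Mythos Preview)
Your strategy --- reduce both claims to the multiplicativity of arrow scalars under the pointwise tensor --- is precisely what the paper intends; its own proof is the single line ``It follows from \ref{tensor product}'', so you have in fact written out more than the paper does, and your care about how $f\otimes f'$ behaves on generators is a point the paper leaves entirely implicit.

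That said, there is a genuine gap. When you write ``the hypothesis that $V$ and $V'$ are stable gives $V_\alpha \neq 0$ and $V'_\alpha \neq 0$ for every arrow $\alpha$'', you are using the implication ``stable $\Rightarrow$ every arrow scalar non-zero''. The Picard group proposition you invoke only supplies the \emph{reverse} direction (all arrows non-zero $\Rightarrow$ invertible under $\otimes$ $\Rightarrow$ stable); the forward direction is false for $\Theta$-stability. For instance, take $Q$ with two sources $s_1,s_2$, one hidden vertex $i$, one sink $t$, and the three arrows $s_1\to i$, $s_2\to i$, $i\to t$. A thin representation is a triple of scalars $(a,b,c)$, and by Lemma~\ref{lemmasemistable} it is $\Theta$-stable exactly when $f_i=(a,b)\neq(0,0)$. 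Then $(1,0,1)$ and $(0,1,1)$ are both $\Theta$-stable, while their tensor product $(0,0,1)$ is not --- so under that reading the corollary itself fails and no argument can succeed. The only reading consistent with the paper's one-line proof (and with the placement immediately after the Picard discussion) is that ``stable'' here is shorthand for ``all arrow scalars non-zero'', i.e.\ membership in the Picard group; with that interpretation your argument is entirely correct. You should make that identification explicit rather than sliding between the two notions.
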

\begin{proof}
It follows from \ref{tensor product}.
\end{proof}

\begin{proposition}
  $(\mathcal{N}_Q, \otimes, E)$ is a (strict) symmetric monoidal category. 
\end{proposition}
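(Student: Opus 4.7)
The plan is to restrict to $\mathcal{N}_Q$ the symmetric monoidal structure on $\rep Q$ given by Herschend's pointwise tensor product, in the same way that the preceding proposition restricted it to $\mathcal{R}(P,I)$. Three things need to be verified: that $\otimes$ preserves $\mathcal{N}_Q$ on both objects and morphisms; that the unit $E$ lies in $\mathcal{N}_Q$ and satisfies the strict unit laws there; and that the strict associator and the symmetry isomorphism restrict to $\mathcal{N}_Q$.

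First I would check closure. On objects this is exactly Corollary \ref{tensor product}: a pointwise tensor of thin representations is again thin, since $\mathbb{C} \otimes_{\mathbb{C}} \mathbb{C} \cong \mathbb{C}$. On morphisms, given $g: V \to V'$ and $g': W \to W'$ in $\mathcal{N}_Q$, the vertex-wise component of $g \otimes g'$ is $g_v \otimes g'_v$, which under the canonical identification $\mathbb{C} \otimes_{\mathbb{C}} \mathbb{C} \cong \mathbb{C}$ becomes the scalar $g_v \cdot g'_v$; at every source and sink this equals $1 \cdot 1 = 1$, so $g \otimes g' \in \mathcal{N}_Q$. The unit $E$ is the thin representation of $Q$ with every arrow represented by $1$, hence $E \in \mathcal{N}_Q$ by definition, and the strict equalities $E \otimes V = V = V \otimes E$ are inherited from $\rep Q$.

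It remains to restrict associativity and symmetry. Strict associativity $\otimes(\otimes \times \mathbf{1}) = \otimes(\mathbf{1} \times \otimes)$ is inherited verbatim from the ambient category. The symmetry $S_{A,B}: A \otimes B \to B \otimes A$ is given vertex-wise by the flip on $\mathbb{C} \otimes_{\mathbb{C}} \mathbb{C}$, which under the canonical identification with $\mathbb{C}$ is the scalar $1$ at every vertex, in particular at every source and sink; hence $S_{A,B}$ is a morphism in $\mathcal{N}_Q$, and its naturality together with the symmetry axioms descend from the corresponding statements in $\rep Q$. The only subtle point, and potentially the main obstacle, is recognising that all the structural morphisms have vertex-wise scalar $1$ at sources and sinks, so that they are bona fide morphisms in the more restrictive category $\mathcal{N}_Q$ rather than merely in $\rep Q$; once this is observed, the rest is formal.
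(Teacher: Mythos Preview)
Your proof is correct. The paper's proof is a one-liner: it uses that $\mathcal{N}_Q$ can be identified with a \emph{full} subcategory of $\mathcal{R}(P,I)$ (namely the one given by thin $V$), closed under $\otimes$; fullness then automatically guarantees that the structural isomorphisms of $\mathcal{R}(P,I)$ restrict to $\mathcal{N}_Q$, with no further check needed.

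You instead work inside $\rep Q$, where $\mathcal{N}_Q$ is \emph{not} a full subcategory, and this forces you to verify by hand that the symmetry $S_{A,B}$ (and implicitly the identities serving as associators and unitors) have component $1$ at every source and sink. You correctly identify this as the only delicate point, and your observation that the flip on $\mathbb{C}\otimes_{\mathbb{C}}\mathbb{C}$ becomes the scalar $1$ under the canonical identification handles it. Your route is more self-contained, avoiding the passage through $\mathcal{R}(P,I)$; the paper's route is shorter because it trades that check for the fullness observation established just before the proposition.
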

\begin{proof}
$\mathcal{N}_Q$ is a full subcategory  of the monoidal ${\mathcal R}(P,I)$, closed under the operation $\otimes$.
\end{proof}

\begin{remark}
    The classes in $Pic(\mathcal{N}_Q, \otimes, E)$ are given by objects in $\mathcal{N}^S_Q$, but in general $\mathcal{N}^S_Q$ contains more objects. A representation of $Q'$ might be simple while having an arrow being represented by the scalar $0$.
\end{remark} 

\subsection{ReLU neural networks} 

In the previous subsections, we were ignoring the activation functions on the neural networks by the argument given by theorem \ref{thm:func-fact}, where each input data gets mapped to a quiver representation and so evaluating the network function on a fix input is equivalent to evaluating a vector on a neural network of the form $(V,1)$. Here, we study moduli spaces for neural networks with a specific activation function for which we can find invariance under the action of a subgroup of $G$. That is, as opposed to the two previous subsections, here we consider a moduli space on which the neural network $(W,f)$ lives together with the image of the knowledge map $\varphi(W,f)$ for a specific activation function $f$. This activation function is the so-called ReLU function (Rectified Linear Units) which is defined as follows:
\[
    ReLU(x) = max(0,x).
\]
The network functions of neural networks with ReLU activation functions are clearly invariant under the restricted action of the group
\[
    G^+ = \prod_{i\in \tilde{Q}} \mathbb{R}^+
\]
when considered as a subgroup of $G$. In this case, we will now describe an approach to defining moduli spaces of ReLU neural networks via the symplectic reduction approach to quiver moduli of \cite{King94}.

We choose Hermitian forms on all complex vector spaces $V_i$ for $i\in\tilde{Q}_0$ (note that this results in Hermitian forms on all $U_i$ and $W_i$). This choice determines a unitary subgroup $U(V_i)\subset{\rm GL}(V_i)$, and we consider the maximal compact subgroup $U_{\bf d}=\prod_{i\in \tilde{Q}_0}U(V_i)\subset G_{\bf d}(\tilde{Q})$. 
We consider the momentum map
$$\mu_{{\bf d}}:R_{{\bf d}}(Q)\rightarrow \bigoplus_{i\in \tilde{Q}_0}{\rm End}(V_i),$$
$$\mu((V_\alpha),(f_i),(h_i))\mapsto(\sum_{\alpha:\rightarrow i}V_\alpha V_\alpha^*-\sum_{\alpha:i\rightarrow}V_\alpha^*V_\alpha+f_if_i^*-h_i^*h_i)_i.$$
 Applying the general results of \cite[Section 6]{King94} to the deframed quiver $Q'$, we easily find:
 
\begin{theorem} We have canonical homeomorphisms
$$\mathcal{M}\sim\mu^{-1}(0)/U_{\bf d},$$
$$\widetilde{\mathcal{M}}\sim\mu^{-1}(({\rm id}_{V_i})_i)/U_{\bf d}.$$
\end{theorem}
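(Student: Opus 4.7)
The approach is to apply the general symplectic quotient description of quiver moduli from \cite[Section 6]{King94} to the deframed quiver $Q'$, and then translate the result back to $R_{\bf d}(Q)$ via the isomorphism established in Section~\ref{section:deframing}. In King's framework, a choice of Hermitian forms endows $R_{\bf d'}(Q')$ with a Kähler structure, the action of the maximal compact $U_{\bf d'}$ is Hamiltonian, and the GIT moduli space for a stability character $\Theta$ with $\Theta({\bf d'})=0$ is canonically homeomorphic to the symplectic quotient $\mu_{\bf d'}^{-1}(\xi_\Theta)/U_{\bf d'}$, where $\xi_\Theta$ is the central element whose $i$-component is $\Theta_i\cdot\mathrm{id}_{V_i}$ (in King's sign convention).

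First I would verify that the formula for $\mu_{\bf d}$ stated just before the theorem is indeed the $i$-component ($i\in\tilde Q_0$) of the King moment map under the deframing identification. This is the standard computation: the action of $U(V_i)$ on $\mathrm{Hom}(V_j,V_i)$ by post-composition contributes $+V_\alpha V_\alpha^*$, the action on $\mathrm{Hom}(V_i,V_k)$ by pre-composition with the inverse contributes $-V_\alpha^*V_\alpha$, and the collected framing and coframing maps $f_i,h_i$ contribute $+f_if_i^*$ and $-h_i^*h_i$ respectively, agreeing with the displayed formula.

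For the first homeomorphism I would apply King's result to $Q'$ with the trivial stability $\Theta=0$ (as used to construct $\mathcal{M}$); the resulting level is $0$. To pass from $\mu_{\bf d'}^{-1}(0)/U_{\bf d'}$ to $\mu_{\bf d}^{-1}(0)/U_{\bf d}$ I would observe that the scalar $\mu_\infty$-component is forced to vanish on $\mu_{\bf d}^{-1}(0)$ by the trace identity $\sum_{i\in Q'_0}\mathrm{tr}(\mu_i)\equiv 0$, so the two level sets agree, and that the $U(1)$-factor of $U_{\bf d'}$ at $\infty$ is absorbed by the diagonal scalar action inside $U_{\bf d}$, exactly mirroring the compensation of $\mathbb{C}^*$ by $G_{\bf d}(\tilde Q)$ proved in Section~\ref{section:deframing}. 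For the second homeomorphism I would apply King's result to the stability $\Theta_i=-1$ for $i\in\tilde Q$, $\Theta_\infty=\dim\tilde{\mathbf d}$ used to define $\widetilde{\mathcal M}$. Since the paper uses the convention opposite to King's for semistability, the corresponding central level at each $i\in\tilde Q_0$ is $+\mathrm{id}_{V_i}$, and the induced value at $\infty$ (namely $-\dim\tilde{\mathbf d}$, forced by the trace identity) is again absorbed by the $U(1)$-quotient.

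The main obstacle I expect is bookkeeping rather than substance: correctly tracking the sign flip between the paper's semistability convention and King's, and justifying the passage from King's $\mu_{\bf d'}^{-1}(\cdot)/U_{\bf d'}$ to the $\mu_{\bf d}^{-1}(\cdot)/U_{\bf d}$ of the statement via the trace identity and the $U(1)$-absorption. Once these two points are in place, the theorem is a direct specialization of \cite[Section 6]{King94} to $Q'$.
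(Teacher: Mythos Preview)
Your proposal is correct and follows exactly the paper's approach: the paper's entire proof is the single sentence ``Applying the general results of \cite[Section 6]{King94} to the deframed quiver $Q'$, we easily find,'' and you have supplied precisely the translation to $Q'$ and the bookkeeping (moment-map formula, trace identity, $U(1)$-absorption, sign convention) that this sentence implicitly relies on. If anything, your write-up is more detailed than the paper's.
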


In fact, we can immediately adopt this to the real setup: assuming the $V_i$ to be real vector spaces and choosing scalar products, we have orthogonal subgroups $O(V_i)\subset {\rm GL}(V_i)$, determining a maximal compact subgroup $O_{\bf d}\subset G_{\bf d}(\tilde{Q})$. Again defining the map $\mu_{\bf d}$ as above, we find homeomorphisms 
$\mathcal{M}\sim\mu^{-1}(0)/O_{\bf d}$ and $\widetilde{\mathcal{M}}\sim\mu^{-1}(({\rm id}_{V_i})_i)/O_{\bf d}$ for the real versions of the moduli spaces.

Specializing to thin representations, that is, $d_i=1$ for all $i\in \tilde{Q}_0$, we are in the following situation:

All $V_i$ are just one-dimensional vector spaces $\mathbb{R}$, all maps $V_\alpha$ reduce to scalars, all $f_i$ are linear forms on the spaces $U_i$,  and all $h_i$ are vectors in $W_i$. The structure group is $G_{\bf d}=(\mathbb{R}^*)^{Q_0}$,  with maximal compact subgroup $(O_1(\mathbb{R}))^{Q_0}=(\pm 1)^{Q_0}$. The previous theorem now reads:

\begin{corollary}
The moduli space $\mathcal{M}$ is homeomorphic to $\mu_{{\bf 1}}^{-1}(0)/(\pm 1)^{Q_0}$, and the moduli space $\widetilde{\mathcal{M}}$ is homeomorphic to $\mu_{{\bf 1}}^{-1}((1)_i)/(\pm 1)^{Q_0}$. Explicitly, we have
$$\mu_{{\bf 1}}^{-1}(0)=\{((V_\alpha,(f_i),(h_i))\, |\sum_{\alpha:\rightarrow i}V_\alpha^2-\sum_{\alpha:i\rightarrow}V_\alpha^2+||f_i||^2-||h_i||^2=0\},$$
$$\mu_{{\bf 1}}^{-1}((1)_i)=\{((V_\alpha,(f_i),(h_i))\, |\sum_{\alpha:\rightarrow i}V_\alpha^2-\sum_{\alpha:i\rightarrow}V_\alpha^2+||f_i||^2-||h_i||^2=1\}.$$
\end{corollary}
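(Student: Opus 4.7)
The plan is to obtain this corollary as a direct specialization of the preceding theorem (the real version) to the thin case $d_i=1$ for all $i\in\tilde{Q}_0$, so that the only real content is to rewrite each summand of the momentum map explicitly. The homeomorphisms $\mathcal{M}\sim\mu_{\bf 1}^{-1}(0)/(\pm 1)^{Q_0}$ and $\widetilde{\mathcal{M}}\sim\mu_{\bf 1}^{-1}((1)_i)/(\pm 1)^{Q_0}$ require no new argument: we simply observe that when $d_i=1$ the orthogonal group $O(V_i)=O_1(\mathbb{R})$ equals $\{\pm 1\}$, so $O_{\bf d}=(\pm 1)^{\tilde{Q}_0}$, and then invoke the real version of the previous theorem verbatim.

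To produce the explicit description of the fibres, I would unwind the definition of $\mu_{\bf d}$ component by component under the thin identification. Since each $V_i\simeq\mathbb{R}$, ${\rm End}(V_i)\simeq\mathbb{R}$, each map $V_\alpha$ for $\alpha:i\to j$ reduces to a real scalar, each $f_i\colon U_i\to\mathbb{R}$ is a linear form on the Euclidean space $U_i$, and each $h_i\colon\mathbb{R}\to W_i$ is a vector in the Euclidean space $W_i$. For a scalar $V_\alpha\in\mathbb{R}$ the adjoint $V_\alpha^*$ is the same scalar, so both $V_\alpha V_\alpha^*$ and $V_\alpha^* V_\alpha$ simplify to $V_\alpha^2$. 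For the covector $f_i$, the composition $f_i f_i^*\colon\mathbb{R}\to\mathbb{R}$ is multiplication by $\|f_i\|^2$ with respect to the chosen scalar product on $U_i$; dually $h_i^* h_i$ is multiplication by $\|h_i\|^2$. Collecting these, the $i$-th component of $\mu_{\bf 1}((V_\alpha),(f_i),(h_i))$ is
\[
\sum_{\alpha:\to i}V_\alpha^2-\sum_{\alpha:i\to}V_\alpha^2+\|f_i\|^2-\|h_i\|^2,
\]
and the two claimed equations follow by setting this expression equal to $0$ and to $1$ respectively at every vertex $i\in\tilde{Q}_0$.

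The main obstacle, if any, is purely notational: making sure the adjoint conventions used to define $\mu_{\bf d}$ really collapse to squares of real scalars and squared Euclidean norms in the thin real case (rather than leaving behind any complex conjugation or sign artefact), and checking that the ordering $f_if_i^*$ versus $h_i^*h_i$ produces a scalar (not an endomorphism of $U_i$ or $W_i$) on the right-hand side. Once these conventions are pinned down, the corollary is immediate from the theorem and the computation above.
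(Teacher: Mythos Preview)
Your proposal is correct and matches the paper's approach exactly: the paper gives no separate proof for this corollary, treating it as an immediate specialization of the real version of the preceding theorem to the case $d_i=1$, where $O(V_i)=O_1(\mathbb{R})=\{\pm 1\}$ and the momentum map components reduce to the displayed scalar expressions. Your explicit unwinding of $V_\alpha V_\alpha^*=V_\alpha^2$, $f_if_i^*=\|f_i\|^2$, and $h_i^*h_i=\|h_i\|^2$ is precisely the computation the paper leaves to the reader.
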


This symplectic description of the moduli space in the thin real case is the key to defining a moduli space for the smaller structure group $G^+$. Namely, the observation that the maximal compact subgroup of $\mathbb{R}^*$ is $\pm 1$, whereas the maximal compact subgroup of $\mathbb{R}^+$ is trivial, motivates the following:

\begin{definition} We define moduli spaces for the $(\mathbb{R}^+)^{Q_0}$-action on $R_{{\bf 1}}(Q)$ as
$$\mathcal{M}^+=\mu_{{\bf 1}}^{-1}(0),\; \widetilde{\mathcal{M}}^+=\mu_{{\bf 1}}^{-1}((1)_i).$$
\end{definition}

Since the moduli space $\mathcal{M}$ arises from $\mathcal{M}^+$ by factoring by a finite group action, we find that we have a canonical map $\mathcal{M}^+\rightarrow\mathcal{M}$, which is a finite branched covering.

We define the network function $W_{(V,f,h)}:\bigoplus_{i\in Q_0}U_i\rightarrow\bigoplus_{i\in Q_0}W_i$ using the activation functions ${\rm ReLU}(x)=\max(x,0)$ as follows: we start with input data $(u_i\in U_i)_{i\in Q_0}$,  recursively define
$$v_i={\rm ReLU}\Big(f_i(u_i)+\sum_{\alpha:j\rightarrow i}V_\alpha(v_j)\Big),$$
and finally obtain the output data $w_i=h_i(v_i)$, for all $i\in Q_0$. 

Our main observation, immediately resulting from the $\mathbb{R}^+$-invariance of ReLU and our definition of the moduli spaces, is:
\begin{remark}The network map $W:R_{{\bf 1}}(Q)\rightarrow{\rm Fun}(\bigoplus_{i\in Q_0}U_i\rightarrow\bigoplus_{i\in Q_0}W_i)$ is $(\mathbb{R}^+)^{Q_0}$-invariant,   thus descends to a map $$W:\left\{
{\widetilde{\mathcal{M}}^+}
\atop
{\mathcal{M}^+}
\right\} \rightarrow{\rm Fun}(\bigoplus_{i\in Q_0}U_i\rightarrow\bigoplus_{i\in Q_0}W_i).$$ \end{remark}

We illustrate this observation in the simplest example of the quiver $Q$ being a single vertex, and $U_i=\mathbb{R}=W_i$. \\  
A double-framed thin representation is then just a pair of scalars $(f,h)\in\mathbb{R}^2$,   subject to the base change action $g\cdot(f,h)=(g\cdot f,\frac{1}{g}\cdot h)$. The network functions $u\mapsto h\cdot\max(f\cdot u,0)$.    We then find
$$\widetilde{\mathcal{M}}=\mathcal{M}=\mathbb{R}\mbox{ via }(f,h)\mapsto h\cdot f.$$  
$$\mathcal{M}^+=\{(f,h)\, |\, f^2-h^2=0\},\;\widetilde{\mathcal{M}}^+=\{(f,h)\, |\, f^2-h^2=1\}.$$

\subsection*{Acknowledgements} We acknowledge that some results of sections 2, 3 and 4 have been obtained independently by Alexander Schmitt with a more geometric approach that will appear in an upcoming paper.

\bibliographystyle{plain}

\bibliography{references}

\begin{thebibliography}{10}

\bibitem{AJ20}
Marco Armenta and Pierre-Marc Jodoin.
\newblock The representation theory of neural networks.
\newblock {\em arXiv preprint}, arXiv:2007.12213, 2020.

\bibitem{Benabou}
Jean B\'enabou.
\newblock Catégories avec multiplication.
\newblock {\em C. R. Acad. Sci. Paris}, 256:1887--1890, 1963.

\bibitem{Bengio13}
Yoshua Bengio, Aaron Courville, and Pascal Vincent.
\newblock Representation learning: A review and new perspectives.
\newblock {\em IEEE Transactions on Pattern Analysis and Machine Intelligence},
  35(8):1798--1828, 2013.

\bibitem{LeBruynProcesi90}
Lieven~Le Bruyn and Claudio Procesi.
\newblock Semisimple representations of quivers.
\newblock {\em Transactions of the American Mathematical Society},
  317(2):585--598, 1990.

\bibitem{Cerulli}
Giovanni Cerulli~Irelli.
\newblock Three lectures on quiver {G}rassmannians.
\newblock {\em Representation theory and beyond. Contemp. Math.}, 758:57--89,
  2020.

\bibitem{MNIST}
Li~Deng.
\newblock The mnist database of handwritten digit images for machine learning
  research.
\newblock {\em IEEE Signal Processing Magazine}, 29(6):141--142, 2012.

\bibitem{Goodfellow-et-al-2016}
Ian Goodfellow, Yoshua Bengio, and Aaron Courville.
\newblock {\em Deep Learning}.
\newblock MIT Press, 2016.

\bibitem{Hers08}
M.~Herschend.
\newblock Tensor products of quiver representations.
\newblock {\em J. Pure Appl. Alg.}, 212:452--469, 2008.

\bibitem{Hinton07}
Geoffrey Hinton.
\newblock Learning multiple layers of representations.
\newblock {\em Trends Cogn Sci.}, 11:428--434, 2007.

\bibitem{King94}
A.~King.
\newblock Moduli of representations of finite dimensional algebras.
\newblock {\em Quarterly Journal of Mathematics}, 45:515--530, 1994.

\bibitem{MacLane97}
Saunders~Mac Lane.
\newblock {\em Categories for the working mathematician}.
\newblock Springer, 1997.

\bibitem{Rei}
M.~Reineke.
\newblock Framed quiver moduli, cohomology, and quantum groups.
\newblock {\em J. Alg.}, 320:94--115, 2008.

\bibitem{Rumelhart86}
D.~Rumelhart, G.~Hinton, and R.~Williams.
\newblock Learning representations by back-propagating errors.
\newblock {\em Nature}, 323:533--536, 1986.

\bibitem{Schiffler14}
Ralph Schiffler.
\newblock {\em Quiver Representations}.
\newblock CMS Books in Mathematics/Ouvrages de Math\'ematiques de la SMC.
  Springer, Cham, 2014.

\bibitem{Schofield01}
Aidan Schofield.
\newblock Birational classification of moduli spaces of representations of
  quivers.
\newblock {\em Indagationes Mathematicae}, 12(3):407--432, 2001.

\bibitem{Tegmark}
Max Tegmark.
\newblock Importance of quantum decoherence in brain processes.
\newblock {\em Phys. Rev. E}, 61:4194--4206, Apr 2000.

\end{thebibliography}

\newpage

\appendix

\section{Moduli spaces and neural networks}

In this appendix, we will present a motivation to the study of moduli spaces of double framed quiver representations by analysing concepts coming from deep learning and artificial neural networks.

For this, we first present the machine learning concepts needed to introduce the reader to the back-propagation algorithm (used to compute the gradient of a neural network) in order to write it down in terms of the approach to neural networks given by quiver representations as introduced in \cite{AJ20} and then show that the back-propagation algorithm also factorizes through the moduli space of double framed thin quiver representations.

In deep learning (i.e., machine learning with neural networks), the weights (the thin representation $W$ as introduced in Definition \ref{neural-net}) of a neural network are obtained by optimization (usually gradient descent or some variant of it) over a dataset while leaving the activation functions unchanged. During this process, the network function is called several times to compute a loss value with its output, with which a gradient is computed using the back-propagation algorithm. The information needed to perform this \textit{training} algorithm consists of the following two objects:

\begin{itemize}
    \item The network function.
    \item The gradient with respect to a subset of input data.
\end{itemize}

It was shown in \cite{AJ20} that the network function factorizes through the knowledge map $\varphi(W,f)$ of the neural network $(W,f)$. We will show in this appendix that the back-propagation algorithm also factorizes through the knowledge map as a motivation to the study of moduli spaces of double framed quiver representations. In other words, we show that the knowledge map and the moduli space of the underlying quiver of a neural network has information from both the network function and the gradient of the neural network and therefore this can clearly have important implications in the study of neural networks.

\subsection{Machine learning concepts}

We will present here the main concepts from machine learning in a more algebraic way but still equivalent to their meaning in machine learning.

A \textbf{labeled data set} is a finite set $D = \{ (x_i,y_i) \ : \ x_i \in \mathbb{C}^d, \ y_i \in\mathbb{C}^q \  i=0,...,n\}$. The machine learning practitioners might want to enforce the \textit{labels} $y_i$ to take discrete values, for example, $y_i\in \{ 0,1,...,9 \}$ for a classification problem with $10$ different classes. But this is only one way of representing labels of a data set or the outputs of a \textit{classifier} (a function that takes an input $x$ to a discrete decision $y$). This is represented by pre-assigning a label to each sink of the network quiver (one for each class of the classification problem) and using the convention that the sink with higher value when evaluating $\Psi(W,f)(x)$ corresponds to the \textit{desicion} of the neural network (as a classifier). This is effectively what happens even if one uses an activation function after the network function, like a softmax:
\[
\sigma(z) = \left( \frac{e^{z_1}}{ \sum_i e^{z_i}} , \cdots,\frac{e^{z_q}}{ \sum_i e^{z_i}}   \right).
\]
After applying the softmax activation function to $\Psi(W,f)(x)$, the values obtained are interpreted as \textit{the confidence} of the neural network on $x$ being of each corresponding class. Usually, a data set is thought as a finite set of samples taken from a hypothetical function $F:\mathbb{C}^d \to \mathbb{C}^q$. The choice of a data set (or more generally, a \textit{task}) determines the number of input and output vertices of any quiver over which we will take neural networks to train them. Therefore, the data determines the double framing of the hidden quiver.

In practice, one works over $\mathbb{R}$ instead of $\mathbb{C}$, but this can easily be resolved by restricting to real values. A \textbf{loss function} is a differentiable function $\mathcal{L}:\mathbb{R}^q \times \mathbb{R}^q \to \mathbb{R}$, that is used to measure how well is the network function computing outputs that match the given labels (some authors call the composition $\mathcal{L} \circ \Psi(W,f)$ the loss function \cite{Goodfellow-et-al-2016}). One simple example of loss function is the mean-square error $\mathcal{L}(z,z')=\sum_{i=1}^q (z_i-z_i')^2$, and specifically for classification tasks one chooses the cross-entropy loss function, which assumes that the entry vectors $z$ and $y$ are probability vectors (for example, after applying the softmax function), given by
\[
    \mathcal{L}(z, y) = - \sum_{i} z_i log(y_i).
\]

Data sets are really big, for example, the famous MNIST dataset \cite{MNIST} of hand written digits $n=60,000$, which are labeled hand written images, is one of the simplest datasets used to train neural networks. So the training has to be made efficient, and for this the data set gets randomly shuffled and partitioned into batches (or mini-batches) of sizes 8, 16, 32, 64, 128, 512 or 1024 but certainly not higher because of computational issues. One takes a batch of data $x_1,...,x_t$ and computes $w=\frac{1}{n}\sum_{i=1}^n \Psi(W,f)(x_i)$ which is used to compute $\sigma(w)$. Then the gradient of $\mathcal{L} \circ \Psi(W,f)$ is computed using $\sigma(w)$, the representation $W$ and the activation and pre-activations of every vertex on the neural network.

As when we compute the network function $\Psi(W,f)(x)$, we need to keep track of some values (like the pre-activations and activation outputs of vertices, defined after Definition \ref{neural-net}), we will need to keep track of some other values along the computation of the gradient with the back-propagation algorithm. 

We will write down here the algorithm used to compute the gradient of the composition $\mathcal{L} \circ \Psi(W,f)$, and for this we will assume the vertices of the hidden layers are all fully connected layers (each vertex in a layer is connected to every every vertex in the next layer), so that the network function $\Psi(W,f)$ is a composition of linear maps followed by coordinate wise activation functions:
\[
    \Psi(W,f) = W^{[L]} \circ f^{[L]} \circ \cdots \circ W^{[2]} \circ f^{[2]} \circ W^{[1]}.
\]

\begin{remark}
    In deep learning, researchers refer to functions defined as above as neural networks. Combinatorially, this is equivalent to the network function as defined in \cite{AJ20}. Note also that we restrict here to Multi Layer Perceptrons (every layer is fully connected to the next) only to illustrate how the algorithm works. Combinatorially, this algorithm is clearly analogous for any other network quiver including convolutional layers (each $W^{[i]}$ is a circulant matrix), pooling layers, batch norm layers, etc., see \cite{AJ20}.
\end{remark}

\subsection{Back-propagation}

Consider a neural network $(W,f)$ over a network quiver $Q$ given only by fully-connected layers together with a labeled dataset $D$ partitioned into mini-batches. The quiver is formed by $L$ fully-connected hidden layers where the $\ell$-th layer has vertices $Q^{[\ell]}_0$ and point-wise activation functions $f^{[\ell]}$. The weights of arrows starting on the $(\ell-1)$-th layer and ending on the $\ell$-th layer can be arranged into a matrix defining a linear map $W^{[\ell]}:\mathbb{C}^{n_{\ell-1}} \to \mathbb{C}^{n_\ell}$, and then the network function of $(W,f)$ can be written as
\[
    \Psi(W,f) = W^{[L]} \circ f^{[L-1]} \circ W^{[L-1]} \circ \cdots \circ f^{[1]} \circ W^{[1]}.
\]
Observe that by adding bias vertices at every layer, the linear maps $W^{[\ell]}$ become affine transformations. The ``\textit{basic operations of neural networks}'' are affine transformations followed by point-wise non-linear activation functions.

The back propagation algorithm (originally introduced in \cite{Rumelhart86}, see chapter 6 of \cite{Bengio13} for a modern approach) computes the gradient $dW$ with respect to each arrow of the quiver, and so is given by matrices $dW^{[1]},...,dW^{[L]}$ just as $W$. 
\begin{remark}
    The back-propagation algorithm computes the gradient of any \textit{computational graph}, which is effectively a quiver representation.
\end{remark}
Denote $a^{[\ell]} = \Big( \act(W,f)_v(x) \Big)_{v \in Q_0^{[\ell]}} \in \mathbb{C}^{n_\ell}$, and $z^{[\ell]} = \Big( \pre \act(W,f)_v(x) \Big)_{v \in Q_0^{[\ell]}} \in \mathbb{C}^{n_\ell}$. We will keep track of the following values $da^{[L]},...,da^{[1]}$ in that order (and therefore the name back-propagation). We set
\[
    da^{[L]} = \dfrac{\partial \mathcal{L} }{\partial a^{[L]}} \Big( a^{[L]},y \Big).
\]
Then, as a matrix, we define
\[
    dW^{[L]} = da^{[L]} \Big( a^{[L-1]} \Big)^T,
\]
and also define
\[
    da^{[L-1]} = \Big( W^{[L]} \Big)^T da^{[L]}.
\]

From here, one defines inductively
\[
    dW^{[\ell]} := \Big( da^{[\ell]} \odot f^{[\ell]}\big( z^{[\ell]} \big) \Big) \Big( a^{[\ell-1]} \Big)^T,
\]
and
\[
    da^{[\ell-1]} := \Big( W^{[\ell]} \Big)^T \Big( da^{[\ell]} \odot df^{[\ell]} \big( z^{[\ell]} \big) \Big),
\]
where $\odot$ is the Hadamard (point-wise) product of vectors of the same size. Note that, by definition, $dW$ is a matrix multiplication obtained from the pre-activations of vertices, activation outputs of vertices, the weights, the activation function and its derivative $df$. 

\begin{remark}
    Note that we are abusing the notation by writing $dW$ for the gradient although it also depends on the chosen input $x$ and the activation functions $f$.
\end{remark}

\subsection{Combinatorial back-propagation}

We now write the previous algorithm in a combinatorial way. First on the computation of $\Psi(W,f)(x)$ one keeps track of the activation outputs at the $\ell$-th layer:
\[
    a^{[\ell]} = \Big( \act(W,f)_v(x) \Big)_{v \in Q_0^{[\ell]}} \in \mathbb{C}^{n_\ell},
\]
and pre-activations
\[
    z^{[\ell]} = \Big( \pre \act(W,f)_v(x) \Big)_{v \in Q_0^{[\ell]}} = \Hat{\Psi}\left( |W_x^f|_\ell \right)  \in \mathbb{C}^{n_\ell},
\]
where $|W_x^f|_\ell$ is the image of $W_x^f$ under the forgetful functor that forgets layers $\ell+1,...,L$. We will abuse the notation by writing $ \Hat{\Psi}(W_x^f)_v$ for the output at vertex $v$ when we feed the vector $(1,...,1)$ to $W_x^f$.  For the backward computation we define for each vertex $v\in Q_0$:
\[
    \big( da \big)_{v} = \left\{ \begin{array}{ll}
        {\displaystyle\sum_{\alpha:t(\alpha)=v}}  \dfrac{\partial \mathcal{L}}{\partial t(\alpha)} \left( \Hat{\Psi}(W_x^f)_{t(\alpha)} , y \right) & \text{ if } v \text{ is a sink,} \\ \\
        {\displaystyle\sum_{\alpha:s(\alpha)=v}} W_\alpha (da)_{t(\alpha)} & \text{ if } v \text{ is one arrow away from a sink}, \\ \\
        {\displaystyle\sum_{\alpha:s(\alpha)=v}} W_\alpha \Big( \big(da\big)_{t(\alpha)} \ f_{t(\alpha)}\big( \Hat{\Psi}(W_x^f)_{t(\alpha)} \big) \Big) & \text{ in any other case.}
    \end{array} \right.
\]
Therefore, the gradient is given by
\[
    \big( dW \big)_\alpha = \left\{ \begin{array}{ll}
        \dfrac{\partial\mathcal{L}}{\partial t(\alpha)} \Big( \Hat{\Psi}(W_x^f)_{t(\alpha)},y \Big) \Big( f_{s(\alpha)} \left( \Hat{\Psi}(W_x^f)_{s(\alpha)} \right) \Big) & \text{ if } t(\alpha) \text{ is a sink,} \\ \\
        \big( da \big)_{t(\alpha)} df_{t(\alpha)}\big( \Hat{\Psi}(W_x^f)_{t(\alpha)} \big) f_{s(\alpha)}\big( \Hat{\Psi}(W_x^f)_{s(\alpha)} \big) & \text{ in any other case.}
    \end{array} \right.
\]
The back-propagation algorithm is therefore a map
\[
    \begin{array}{clcl}
        b(W,f) : & \mathbb{C}^p & \to &  R_{\bf d}(Q) \\
         & x & \mapsto & b(W,f)(x) := dW.
    \end{array}
\]
We can see that $dW$ is computed from the thin representation $W_x^f \in R_{\bf d}(Q)$, so in principle we can define a map that depends on thin representations $V \in R_{\bf d}(Q)$ and produces a thin representation in $R_{\bf d}(Q)$ by changing $V$ for $W_x^f$ in the formulas above. This defines a map
\[
    \begin{array}{clcl}
        d(W,f) : & R_{\bf d}(Q) & \to &  R_{\bf d}(Q). \\
    \end{array}
\]
We have proved the following:
\begin{theorem}
    The following diagram is commutative:
    \[
        \begin{tikzcd}
            \mathbb{C}^d \arrow[rrrr, "b(W f)"] \arrow[drr, "\varphi(W f)"] & & & & R_{\bf d}(Q).  \\
            & & \mathcal{M} \arrow[urr, "d(W f)"] & & \\
        \end{tikzcd}
    \]
\end{theorem}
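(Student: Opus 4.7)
The plan is to prove this by inspection of the back-propagation formulas. The work splits cleanly into two steps: first, to observe that $b(W,f)(x)$ depends on $x$ only through the knowledge representation $W_x^f=\varphi(W,f)(x)$, which yields a tautological factorization $b(W,f)=\widetilde d(W,f)\circ\varphi(W,f)$ with $\widetilde d(W,f)\colon R_{\bf d}(Q)\to R_{\bf d}(Q)$ obtained by formally substituting a generic $V$ for $W_x^f$; second, to show that $\widetilde d(W,f)$ is $G_{\bf d}(\widetilde Q)$-invariant, so descends to a map $d(W,f)\colon\mathcal{M}\to R_{\bf d}(Q)$.

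For the first step, I would read off the recursive formulas defining $(da)_v$ and $(dW)_\alpha$ in the combinatorial back-propagation. The data entering these formulas is of three kinds: the fixed weights $W_\alpha$, the fixed activation functions $f_v$ and their derivatives $df_v$, and the input-dependent quantities $\widehat\Psi(W_x^f)_v$, which are precisely the outputs at vertex $v$ of the representation $W_x^f$ evaluated on the all-ones input. No other occurrence of $x$ appears. Substituting a generic $V\in R_{\bf d}(Q)$ for $W_x^f$ in every occurrence of $\widehat\Psi(W_x^f)_v$ therefore produces a well-defined map $\widetilde d(W,f)\colon R_{\bf d}(Q)\to R_{\bf d}(Q)$, and by construction
\[
b(W,f)(x)=\widetilde d(W,f)(W_x^f)=\widetilde d(W,f)\bigl(\varphi(W,f)(x)\bigr).
\]

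For the second step, I would argue that $\widetilde d(W,f)(V)$ is invariant under the $G_{\bf d}(\widetilde Q)$-action on $V$. Since the only way $V$ enters the formulas is through the values $\widehat\Psi(V)_v$ for vertices $v\in Q_0$, it suffices to show that each such function factors through the moduli space $\mathcal{M}$. For sinks this is exactly the statement of Theorem \ref{thm:AJ}. For an arbitrary vertex $v\in\widetilde Q$ one applies the same theorem to the full subquiver of $Q$ on the vertices that can reach $v$, viewing $v$ itself as a new sink; the resulting activation $\widehat\Psi(V)_v$ is the output map of this smaller neural network, hence depends only on the restricted moduli class, which in turn depends only on the $G_{\bf d}(\widetilde Q)$-orbit of $V$. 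Thus $\widetilde d(W,f)$ descends to the required map $d(W,f)\colon\mathcal{M}\to R_{\bf d}(Q)$, and the triangle commutes by the identity above.

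The main obstacle is essentially bookkeeping: one must carefully check that no occurrence of $x$ in the back-propagation formulas is hidden outside the $\widehat\Psi(W_x^f)_v$ terms. In particular, the Hadamard products with $df^{[\ell]}(z^{[\ell]})$ and the pre-activations $z^{[\ell]}$ must be rewritten in terms of $\widehat\Psi$-outputs to make the dependence on the moduli class manifest; once this is done, the remaining content is structural, and invariance follows directly from Theorem \ref{thm:AJ}.
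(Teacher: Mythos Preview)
Your Step~1 is exactly the paper's argument, and it is the whole of it: the discussion immediately preceding the theorem observes that every occurrence of $x$ in the combinatorial back-propagation formulas enters through a term $\widehat{\Psi}(W_x^f)_v$, so substituting a generic $V$ for $W_x^f$ defines $d(W,f)\colon R_{\bf d}(Q)\to R_{\bf d}(Q)$, after which the paper simply declares ``We have proved the following.'' Note that the paper itself gives $d(W,f)$ domain $R_{\bf d}(Q)$, not $\mathcal{M}$; no descent to the quotient is carried out there.

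Your Step~2, which attempts that descent, contains a genuine gap. For a hidden vertex $v\in\widetilde{Q}_0$ the quantity $\widehat{\Psi}(V)_v$ is \emph{not} $G_{\bf d}(\widetilde{Q})$-invariant: acting by $(g_i)_i$ rescales it by $g_v$ (in the thin case $\widehat{\Psi}(V)_v=\sum_{\omega:s\leadsto v}V_\omega$, and each $V_\omega$ picks up the factor $g_v$ since $g_s=1$ at the source). Your trick of viewing $v$ as a sink of a truncated quiver only yields invariance under the \emph{smaller} structure group in which $g_v=1$; the restricted moduli class is not determined by the full $G_{\bf d}(\widetilde{Q})$-orbit of $V$. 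Since the formulas for $(dW)_\alpha$ then apply the nonlinear functions $f_v$ and $df_v$ to $\widehat{\Psi}(V)_v$, this rescaling does not cancel, and the map $\widetilde{d}(W,f)$ does not descend to $\mathcal{M}$ by the route you propose. In short, drop Step~2: the paper's proof is just your tautological Step~1.
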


Therefore, the knowledge map $\varphi(W,f)$ and the moduli space encode all the essential information of neural networks: the network function and the back-propagation algorithm.
We argue, however, that back-propagation $b(W,f)$ should take values in $R_{\bf d}(Q^{op})$, because of the following straightforward result.

\begin{proposition}
  Let $\tau:(W,f) \to (V,g)$ be an isomorphism of neural networks, then for any $x$ the isomorphism $\tau$ induces isomorphisms of thin representations $\tau:W_x^f \to V_x^f$ and $\tau:dW \to dV$, where we consider $dW$ and $dV$ as representations over the opposite quiver $Q^{op}$.
\end{proposition}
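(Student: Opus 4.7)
The plan is to unpack what an isomorphism $\tau:(W,f)\to(V,g)$ of neural networks concretely is and then run two parallel inductive bookkeeping arguments: one forward along $Q$ for the knowledge map, one backward along $Q^{op}$ for the gradient. Unwinding the definition of a morphism in $NN(Q)$, an isomorphism $\tau$ amounts to a tuple of non-zero scalars $(\tau_v)_{v\in Q_0}$ with $\tau_v=1$ for every source and every sink, subject to the gauge relation $V_\alpha=\tau_{t(\alpha)}\,W_\alpha\,\tau_{s(\alpha)}^{-1}$ and an activation compatibility $g_v(\tau_v z)=\tau_v\, f_v(z)$ for every hidden vertex $v$. Differentiating the latter relation in $z$ gives a crucial companion identity $dg_v(\tau_v z)=df_v(z)$; these two scaling laws will drive everything.

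For the first claim I would prove, by induction on the topological order of $\tilde Q$, that
\[
\act(V,g)_v(x)=\tau_v\cdot \act(W,f)_v(x),\qquad \pre\act(V,g)_v(x)=\tau_v\cdot \pre\act(W,f)_v(x).
\]
The base case ($v$ a source) is immediate since $\tau_v=1$; the inductive step combines the activation compatibility with the fact that $\tau_v$ pulls out of the weighted sum $\sum_\alpha V_\alpha\act(V,g)_{s(\alpha)}$ as a single factor $\tau_v$. Plugging this into the defining formula for the knowledge thin representation shows that at hidden source vertices the ratio $\act/\pre\act$ is $\tau$-invariant, while at input and bias vertices only the factor $V_\alpha$ contributes. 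Either way one obtains $(V_x^g)_\alpha=\tau_{t(\alpha)}\,(W_x^f)_\alpha\,\tau_{s(\alpha)}^{-1}$, which is precisely the change-of-basis rule for thin $Q$-representations; hence $\tau$ realises an isomorphism $W_x^f\to V_x^g$.

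For the second claim I would run a similar induction but in the reverse direction on $Q$. The induction hypothesis is that $(dV)^{(a)}_v=\tau_v^{-1}\cdot(dW)^{(a)}_v$ for every vertex $v$. The base case is any sink $v$, where $\tau_v=1$ and, by the first claim, $\Hat\Psi(V_x^g)_v=\Hat\Psi(W_x^f)_v$, so $(dV)^{(a)}_v=(dW)^{(a)}_v$. The inductive step rewrites $(dV)^{(a)}_v$ using $V_\alpha=\tau_{t(\alpha)}W_\alpha\tau_v^{-1}$ together with the forward scaling of activations (and, once the ``$f$'' in the paper's back-propagation formula is read as the appropriate activation-derivative factor, the $\tau$-invariance $dg_v(\tau_v z)=df_v(z)$); the cumulative factors of $\tau_{t(\alpha)}$ and $\tau_{t(\alpha)}^{-1}$ coming from $V_\alpha$ and from the inductive hypothesis collapse, leaving only a global $\tau_v^{-1}$. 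Substituting this back into the formula for $(dV)_\alpha$ and using the forward scaling on the $f_{s(\alpha)}(\Hat\Psi(W_x^f)_{s(\alpha)})$ factor, one obtains
\[
(dV)_\alpha=\tau_{s(\alpha)}\,\tau_{t(\alpha)}^{-1}\,(dW)_\alpha .
\]
In $Q^{op}$ the arrow $\alpha$ is redirected so its source becomes $t(\alpha)$ and its target becomes $s(\alpha)$; the displayed formula is then exactly the gauge rule for a morphism of thin $Q^{op}$-representations with components $(\tau_v)$. Therefore $\tau$ induces an isomorphism $dW\to dV$ in $R_{\mathbf d}(Q^{op})$, as required.

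The only genuine obstacle is the bookkeeping in the backward induction: each factor in $(dW)_\alpha$ ($(da)_{t(\alpha)}$, the activation-function term at $t(\alpha)$, and the activation at $s(\alpha)$) scales by a different power of $\tau$, and one must check that the activation-function compatibility $g_v(\tau_v z)=\tau_v f_v(z)$ is strong enough to make every factor involving $df$ or $g$-values transform correctly. Once those scalings are tabulated, the two claims reduce to the same mechanical cancellation, with the change from $Q$ to $Q^{op}$ appearing exactly because the roles of $\tau_{s(\alpha)}$ and $\tau_{t(\alpha)}$ are swapped in the backward pass.
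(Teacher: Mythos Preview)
The paper does not actually supply a proof of this proposition: it is introduced as ``the following straightforward result'' and then stated without argument. Your write-up is therefore not competing with an existing proof but filling in the omitted details, and it does so correctly.

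Your forward induction giving $\act(V,g)_v(x)=\tau_v\,\act(W,f)_v(x)$ and $\pre\act(V,g)_v(x)=\tau_v\,\pre\act(W,f)_v(x)$ is exactly right, and from it the gauge relation $(V_x^g)_\alpha=\tau_{t(\alpha)}(W_x^f)_\alpha\tau_{s(\alpha)}^{-1}$ follows immediately. Your backward induction showing $(da^V)_v=\tau_v^{-1}(da^W)_v$ and hence $(dV)_\alpha=\tau_{s(\alpha)}\tau_{t(\alpha)}^{-1}(dW)_\alpha$ is also correct; you were right to read the ``$f_{t(\alpha)}$'' in the paper's combinatorial back-propagation formula as the derivative $df_{t(\alpha)}$, since that is what the preceding layer-wise version with $df^{[\ell]}$ actually says. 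The passage to $Q^{op}$ at the end is precisely the observation that this last identity is the change-of-basis rule for a thin representation once source and target are swapped.

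One small remark: the paper's statement writes $V_x^f$ rather than $V_x^g$, which suggests that in the convention of \cite{AJ20} an isomorphism of neural networks keeps the activation functions fixed (so $f=g$), and the compatibility you posit becomes the homogeneity condition $f_v(\tau_v z)=\tau_v f_v(z)$ restricting which $\tau$ qualify. Your formulation with possibly distinct $f,g$ and the relation $g_v(\tau_v z)=\tau_v f_v(z)$ is a harmless generalisation that specialises to this case.
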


\subsection{Final discussion}

As can be appreciated, every building block of neural networks (weighted graph used to compute an input to output function and an algorithm to compute the gradient with respect to inputs and a loss function) can be written down in purely representation theoretic terms. In deep learning \cite{Goodfellow-et-al-2016, Bengio13, Hinton07}, the idea of studying ``\textit{internal representations}'' (i.e., activation outputs of vertices on hidden layers) comes from the paradigm of trying to understand neural networks by the information carried by the neurons. Although it is true that the output is produced by propagating information through all the layers of the network, what one uses is only the input to output function of the neural network (the network function). For instance, applying an isomorphism of double-framed quiver representations to a neural network preserves the network function but it changes the ``\textit{neuron values}'' (i.e., the activation outputs of hidden vertices) making any analysis of neural networks in terms of the neuron values not well defined. Finally, as noted by M. Tegmark \cite{Tegmark} on the lack of quantum effects in brain processes, the key difference lies not in the neurons that carry this information but in the patterns were by they're connected. Quiver representations can capture precisely this phenomenon.

\end{document}